\DeclareMathOperator{\dif}{\text{\normalfont d}}
\DeclareMathOperator{\rank}{rank}
\DeclareMathOperator{\supp}{supp}
\DeclareMathOperator{\ord}{ord}
\newcommand{\BB}{\mathbb{B}}
\def\log{\mathrm{log}\,}
\theoremstyle{plain}
\newtheorem{thm}{Theorem}[section]
\newtheorem{pro}[thm]{Proposition}
\newtheorem{defi}[thm]{Definition}
\theoremstyle{remark}
\numberwithin{equation}{section}
\theoremstyle{plain}
\newcommand{\thistheoremname}{}
\newtheorem*{genericthm*}{\thistheoremname}
\newenvironment{namedthm*}[1]{\renewcommand{\thistheoremname}{#1}%
	\begin{genericthm*}}
	{\end{genericthm*}}
\newtheoremstyle{named}{}{}{\itshape}{}{\bfseries}{.}{.5em}{\thmnote{#3's }#1}
\theoremstyle{named}
\newcommand\thankssymb[1]{\textsuperscript{\@fnsymbol{#1}}}
\begin{document} 
\title[Holomorphic mappings of maximal rank into projective spaces]{\bf Holomorphic mappings of maximal rank\\
into projective spaces}

\subjclass[2010]{32H30, 32A22}
\keywords{Nevanlinna theory, Second Main Theorem, holomorphic mapping, Wronskian, Fermat hypersurface}

\author{Dinh Tuan Huynh}

\address{Department of Mathematics, University of Education, Hue University, 34 Le Loi St., Hue City, Vietnam}
\email{dinhtuanhuynh@hueuni.edu.vn}

\begin{abstract} Let $1\leq p\leq n$ be two positive integers.
For a linearly nondegenerate holomorphic mapping $f\colon\mathbb{C}^p\rightarrow\mathbb{P}^n(\mathbb{C})$ of {\sl maximal rank} intersecting a family of hyperplanes in {\sl general position}, we obtain a Cartan's type Second Main Theorem in which the counting functions are truncated to level $n+1-p$. Our result strengthens the classical results of Stoll and Vitter, and interpolates the important works of Cartan and Carlson-Griffiths.
	\end{abstract}

\maketitle
\section{Introduction}
Nevanlinna theory was established about one hundred years ago~\cite{Nevanlinna1925} by comparing the growth of a holomorphic map $f:\mathbb{C}\rightarrow\mathbb{P}^1(\mathbb{C})$ with  the number of impacts of $f$ on discs with respect to a collection of $q\geq3$ distinct points in $\mathbb{P}^1(\mathbb{C})$. This quantifies the classical little Picard theorem, and extends the fundamental theorem of algebra from polynomials to meromorphic functions.
	
A short time after that, higher dimensional Nevanlinna theory has been developed by many authors. Cartan~\cite{Cartan1933} studied the frequency of intersection of a linearly nondegenerate entire curve
	$f\colon\mathbb{C}\rightarrow\mathbb{P}^n(\mathbb{C})$
	and a family of $q\geq n+2$ hyperplanes $\{H_i\}_{i\,=\,1,\dots,\, q}$ in {\sl general position}, and he obtained the following {\sl second main theorem}:
	\begin{equation}
	\label{Cartan SMT}
	(q-n-1)
	\,
	T_f(r)
	\,
	\leq
	\,
	\sum_{i=1}^q N_f^{[n]}(r,H_i)+S_f(r),
	\end{equation}
	where $T_f(r)$, $N_f^{[n]}(r,H_i)$ and $S_f(r)$ are standard notions in Nevanlinna theory, which will be described in section 2. The Wronskian technique introduced by Cartan is still an effect tool in value distribution theory until now~\cite{Ru2009,Ru2018,HX2022}.

	Meanwhile, independently, Weyl's~\cite{Weyl1938,Weyl1943}, Ahlfors \cite{Ahlfors1941} built value distribution theory for derived curves associated to entire holomorphic curves which also consists of Cartan's Second Main Theorem as a particular case. Stoll~\cite{Stoll53,Stoll54} extended the source space of the Weyl-Ahlfors theory to arbitrary dimensional $\mathbb{C}^m$ and more generally to parabolic manifolds (see \cite{HX2022} for recent development). Vitter \cite{Vitter77} obtained a Logarithmic Derivative Lemma for meromorphic mappings on $\mathbb{C}^m$ and  gave another proof of the estimate \eqref{Cartan SMT} for $f\colon\mathbb{C}^m\rightarrow\mathbb{P}^n(\mathbb{C})$.
	
In the early 1970s, Griffiths et al. \cite{Carlson-Griffiths 72, Griffiths-King73} refreshed the value distribution theory and established a  Second Main Theorem for the class of differentiably nondegenerate meromorphic mappings $f\colon \mathbb{C}^m\rightarrow V$, where $m\geq\dim V$. Notably, in this result, the truncation level of the counting functions  was decreased to the optimal $1$.
	
It  would be natural to seek a second main theorem for holomorphic mapping, reflexing the geometry of the source and the target spaces. In this paper, we settle this question for the class of holomorphic mappings of {\sl maximal rank}.
\begin{defi}
A holomorphic mapping $f\colon\mathbb{C}^p\rightarrow\mathbb{P}^n(\mathbb{C})$ is said to be of {\sl maximal rank} if at some point $z\in\mathbb{C}^p$, the differential $df$ satisfies $\rank(df)=\min\{p,n\}$.	
\end{defi}
When $p\geq n$, a holomorphic mapping $f\colon\mathbb{C}^p\rightarrow\mathbb{P}^n(\mathbb{C})$ of maximal rank is said to be {\sl differentiably nondegenerate}. Since a satisfactory Second Main Theorem for the class of these mappings was obtained \cite{Carlson-Griffiths 72}, it suffices to work on the case where $1\leq p\leq n$. Our result states as follows.
	
\begin{namedthm*}{Main Theorem} Let $1\leq p\leq n$ be two positive integers.
	Let $f:\mathbb{C}^p\rightarrow\mathbb{P}^n(\mathbb{C})$ be a linearly nondegenerate  holomorphic mapping of maximal rank, and let $\{H_i\}_{1\leq i\leq q}$ be a family of $q\geq n+2$ hyperplanes in general position in $\mathbb{P}^n(\mathbb{C})$. Then the following Second Main Theorem type estimate holds
		\begin{align}
		\label{smt statement}
		(
		q
		-
		n-1
		)\,
		T_{f}(r)\leq\sum_{i=1}^q N_{f}^{[n+1-p]}(r,H_i)
		+
		S_{f}(r),
		\end{align}
		where 
		$$S_f(r)=O(\log T_f(r)+\log r)\qquad\parallel
		$$ is a small error term compared with the order function.
		
	\end{namedthm*}
The truncation level for the counting functions in the above statement contains the geometric information of both source and target spaces of the holomorphic mappings. It strengthens the result of Stoll-Vitter \cite{Stoll54, Vitter77},  where the truncation level $n$ was obtained.  In the case where $p=1$, we  recover the classical Second Main Theorem of Cartan. When $p=n$, the above second main theorem  is a direct consequence of the Carlson-Griffiths' theory of equidimensional holomorphic mappings \cite{Carlson-Griffiths 72}. Thus our main theorem interpolates the above two important works.

As the first applications, by some standard arguments, a defect relation and a ramification theorem for holomorphic mappings of maximal rank are deduced directly from our Second Main Theorem. We then use them to study the degeneracy of holomorphic mappings into  a Fermat hypersurface and provide a Nevanlinna theoretic proof for a result of Etesse \cite[Theorem 2.2.2]{Etesse2023}. Furthermore, by very elementary arguments, our method also works well in the case where $f$ omits a Fermat hypersurface. These results strengthen the work of Green \cite{Green75}, by improving the lower degree bound. In the below statements, we put 
$$
\kappa(p,n)=
\max\{n+1-p,1\}
=
\begin{cases}
n+1-p,&\text{if}\quad 1\leq p< n\\
1,&\text{if} \quad n\leq p.
\end{cases}
$$

\begin{namedthm*}{Theorem A}[See Theorem \ref{degeneracy of holomorphic mapping into fermat hypersurface}]
	Let $p\geq 1,n\geq2$ be positive integers. Let $F$ be the Fermat hypersurface of degree $d$ in $\mathbb{P}^n(\mathbb{C})$, defined by the homogeneous polynomial 
	$$
	Q(\omega)=\sum_{i=0}^n\omega_i^d,
	$$
	where $\omega=[\omega_0:\omega_1:\dots:\omega_n]$ is a homogeneous coordinate of $\mathbb{P}^n(\mathbb{C})$.
	If $d>(n+1)\kappa(n-1,p)$, then the image of every holomorphic mapping $f\colon\mathbb{C}^p\rightarrow F$ of maximal rank lies in a hyperplane section.
\end{namedthm*}

\begin{namedthm*}{Theorem B}[See Theorem \ref{degeneracy of holomorphic mapping omitting fermat hypersurface}]
Let $p\geq 1,n\geq2$ be positive integers. Let $F$ be the Fermat hypersurface of degree $d$ in $\mathbb{P}^n(\mathbb{C})$, defined by the homogeneous polynomial 
$$
Q(\omega)=\sum_{i=0}^n\omega_i^d,
$$
where $\omega=[\omega_0:\omega_1:\dots:\omega_n]$ is a homogeneous coordinate of $\mathbb{P}^n(\mathbb{C})$.
If $d>(n+1)\kappa(p,n)$, then every holomorphic mapping $f\colon\mathbb{C}^p\rightarrow\mathbb{P}^n(\mathbb{C})$ of maximal rank omitting the Fermat hypersurface $F$ is algebraically degenerate.	
\end{namedthm*}

The proof of the Main Theorem requires only some the standard  techniques of Cartan~\cite{Cartan1933} and the representation of linearly dependent condition for family of holomorphic functions on $\mathbb{C}^p$ via the vanishing of the generalized Wronskians~\cite{Schmidt1980,Rot55,Fujimoto1985}. Especially, we employ the recent key observation due to Etesse \cite{Etesse2023}, saying that a subfamily of generalized Wronskians, called {\sl geometric generalized Wronskians}, is enough to guarantee the linearly dependence.

The paper is organized as follows. 
In Section~\ref{section: preparation}, we give a short introduction to higher dimensional Nevanlinna theory, and prepare some necessary materials about the generalized Wronskians. Details of the proof of the Main Theorem will be provided in Section~\ref{proof of the Main Theorem}. In the last section, we give some applications of the Main Theorem.

\section*{Acknowledgments}
We thank Song-Yan Xie for fruitful discussions and valuable comments that greatly improve the text.

\section{Preliminaries}
\label{section: preparation}
	
\subsection{Nevanlinna theory}
Let $z=(z_1,\dots,z_p)$ be the standard coordinate system of $\mathbb{C}^p$. Denote by $\|\cdot\|$ the Euclidean norm:
$$
\|z\|=\sqrt{\sum_{i=1}^{p}|z_i|^2}.
$$	
Set
$$
\alpha=\dif\!\dif^c\|z\|^2,\quad\beta=\dif\!\dif^c\log\|z\|^2,\quad\gamma=\dif^c\log \|z\|^2\wedge\beta^{p-1},
$$
where $\dif=\partial+\bar{\partial}$ and $\dif^c=\dfrac{i}{4\pi}(\bar{\partial}-\partial)$.

Let $\BB_r:=\{z\in\mathbb{C}^p:\|z\|< r\}\subset \mathbb{C}^p$ be the open ball in $\mathbb{C}^p$ of radius $r>0$ centered at the origin. Fix a truncation level $m\in \mathbb{N}\cup \{\infty\}$. For an effective divisor $E=\sum_i\alpha_i E_i$ on $\mathbb{C}^p$ where  $E_i$ are irreducible components and $\alpha_i\geq 0$,  the $m$-truncated degree of the divisor $E$ on  the balls is given by
	\[
	n^{[m]}(t,E)
	:=
\dfrac{1}{t^{2p-2}}\int_{\mathbb{B}_t\cap(\sum_i\min\{m,\alpha_i\}E_i)}\alpha^{p-1}
	\eqno
{{\scriptstyle (t\,>\,0)},}
	\]
the \textsl{truncated counting function at level} $m$ of $E$ is then defined by taking the logarithmic average
	\[
	N^{[m]}(r,E)
	\,
	:=
	\,
	\int_1^r \frac{n^{[m]}(t, E)}{t}\,\dif\! t
	\eqno
	{{\scriptstyle (r\,>\,1)}.}
	\]
	When $m=\infty$, for abbreviation we  write $n(t,E)$, $N(r,E)$ for $n^{[\infty]}(t,E)$, $N^{[\infty]}(r,E)$ respectively. 
	
Let $f\colon\mathbb{C}^p\rightarrow \mathbb{P}^n(\mathbb{C})$ be an entire holomorphic map having a reduced representation $f=[f_0:\cdots:f_n]$ in the homogeneous coordinates $[z_0:\cdots:z_n]$ of $\mathbb{P}^n(\mathbb{C})$. Let $D=\{Q=0\}$ be a divisor in $\mathbb{P}^n(\mathbb{C})$ defined by a homogeneous polynomial $Q\in\mathbb{C}[z_0,\dots,z_n]$ of degree $d\geq 1$. If $f(\mathbb{C}^p)\not\subset\supp D$, then $f^*D$ is a divisor on $\mathbb{C}^p$. We then define the \textsl{truncated counting function} of $f$ with respect to $D$ as
	\[
	N_f^{[m]}(r,D)
	\,
	:=
	\,
	N^{[m]}\big(r,f^*D\big),
	\]
	which measures the intersection frequency of $f(\mathbb{C}^p)$ with $D$. If $f^*D=\sum_i\mu_iE_i$, where $\mu_i>0
	$ and $\mu=\min_i\{\mu_i\}$, then we say that  $f$ is {\sl completely $\mu$--ramified} over $D$, with the convention that $\mu=\infty$ if $f(\mathbb{C}^p)\cap\supp D=\varnothing$. Next,
	the \textsl{proximity function} of $f$ associated to the divisor $D$ is given by
	\[
	m_f(r,D)
	\,
	:=
	\,
	\int_{\|z\|=r}
	\log
	\frac{\big\Vert f(z)\big\Vert_{\max}^d\,
		\Vert Q\Vert_{\max}}{\big|Q(f)(z)\big|}
	\,
	\gamma(z),
	\]
	where $\Vert Q\Vert_{\max}$ is the maximum  absolute value of the coefficients of $Q$ and where
	\begin{equation}
	\label{| |max definition}
	\big\Vert f(z)\big\Vert_{\max}
	:=
	\max
	\{|f_0(z)|,\dots,|f_n(z)|\}.
	\end{equation}
	Since $\big|Q(f)\big|\leq
	\left(\substack{d+n\\ n}
	\right)\,
	\Vert Q\Vert_{\max}\cdot\Vert f\Vert_{\max}^d$, we see that $m_f(r,D)\geq O(1)$ is bounded  from below by some constant.
	Lastly, the \textsl{Cartan order function} of $f$ is defined by
	\begin{align*}
	T_f(r)
	:=
	\int_{\|z\|=r}
	\log
	\big\Vert f(z)\big\Vert_{\max} \gamma(z).
	\end{align*}
	
	The Nevanlinna theory is then established by comparing the above three functions. It consists of two fundamental theorems (for  comprehensive expositions, see Noguchi-Winkelmann \cite{Noguchi-Winkelmann2014} and Ru \cite{Ru2021}).
	
	\begin{namedthm*}{First Main Theorem}\label{fmt} Let $f\colon\mathbb{C}^p\rightarrow \mathbb{P}^n(\mathbb{C})$ be a holomorphic map and let $D$ be a hypersurface of degree $d$ in $\mathbb{P}^n(\mathbb{C})$ such that $f(\mathbb{C}^p)\not\subset D$. Then one has the estimate
		\[
		m_f(r,D)
		+
		N_f(r,D)
		\,
		=
		\,
		d\,T_f(r)
		+
		O(1)
		\]
		for every $r>1$,
		whence
		\begin{equation}
		\label{-fmt-inequality}
		N_f(r,D)
		\,
		\leq
		\,
		d\,T_f(r)+O(1).
		\end{equation}
	\end{namedthm*}
	
	Hence the counting function is bounded from above by some multiple of the order function. The reverse direction is usually much harder, and one often needs to take the sum of the counting functions of many divisors. Such types of estimates are so-called {\sl second main theorems}.

	Throughout this paper, for an entire holomorphic map $f,$ the notation $S_f(r)$ means a real function of $r \in \mathbb{R}^+$ such that 
	\[
	S_f(r) \leq
	O(\log(T_f(r)))+ \epsilon\, \log r
	\]
	for every positive constant $\epsilon$ and every $r$ outside of a subset (depending on $\epsilon$) of finite Lebesgue measure of $\mathbb{R}^+$. In the case where $f$ is rational, we understand that $S_f(r)=O(1)$. In any case we always have
	
	\[
	\liminf_{r\rightarrow\infty}\dfrac{S_f(r)}{T_f(r)}
	=
	0.
	\]

	\subsection{Geometric generalized Wronskian}
Recall that for a collection of $n+1$ entire holomorphic functions $f_0,f_1,\dots,f_n$, their Wronskian is the determinant of the square matrix formed by $f_i$ and their derivatives up to order $n$, namely
$$
W(f_0,f_1,\dots,f_n)
:=
\det
\begin{pmatrix}
f_0&f_1&\dots&f_n\\
f_0'&f_1'&\dots&f_n'\\
\vdots&\vdots&\dots&\vdots\\
f_0^{(n)}&f_1^{(n)}&\dots&f_n^{(n)}
\end{pmatrix}.
$$
It is an important and well-known fact that the family $\{f_0,f_1,\dots,f_n\}$ is $\mathbb{C}$-linearly independent if and only if $W(f_0,f_1,\dots,f_n)\not\equiv 0$ on $\mathbb{C}$. Hence the linearly non-degeneracy of an entire holomorphic map into $\mathbb{P}^n(\mathbb{C})$ having reduced representation $[f_0:f_1:\dots :f_n]$ could be displayed as the non-vanishing everywhere of the Wronskian $W(f_0,f_1,\dots,f_n)$.

Passing to the higher dimensional case where a collection of $n+1$ holomorphic functions $f_i\colon\mathbb{C}^p\rightarrow \mathbb{C}$ ($0\leq i\leq n$) is considered. For a tuple of $p$ nonnegative integers $\alpha=(\alpha_1,\dots,\alpha_p)$, denote by $\Delta^{\alpha}$ the differential operator
\begin{equation}
\label{form of Delta alpha}
\Delta^{\alpha}
:=
\dfrac{\partial^{|\alpha|}}{\partial z_1^{\alpha_1}\dots\partial z_p^{\alpha_p}},
\end{equation}
having order $|\alpha|=\sum_{i=1}^{p}\alpha_i$. A straightforward computation \cite{Fujimoto1985} yields 
\begin{pro}
\label{vanishing order estimate and pole of logarithmic derivative}
Let $g$ be a nonzero holomorphic function on $\mathbb{C}^p$ such that $\dfrac{\Delta^{\alpha}g}{g}\not\equiv 0$ for a tuple of $p$ nonnegative integers $\alpha=(\alpha_1,\dots,\alpha_p)$. For a point $t_0\in\mathbb{C}^p$, denote by $\nu^{\infty}_{\alpha}(t_0)$ the pole order of $\dfrac{\Delta^{\alpha}g}{g}$ at $t_0$ and by $\nu^{0}(t_0)$ the zero order of $g$ at $t_0$. Then
$$
\nu^{\infty}_{\alpha}(t_0)
\leq
\min\{\nu^{0}(t_0),|\alpha|
\}.
$$
\end{pro}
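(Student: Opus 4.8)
The plan is to translate the inequality into two elementary estimates for the vanishing orders (multiplicities) of the holomorphic germs of $g$ and of $\Delta^{\alpha}g$ at $t_{0}$, and then to combine them. Set $k:=\nu^{0}(t_{0})$. If $k=0$, then $\Delta^{\alpha}g/g$ is holomorphic near $t_{0}$ and there is nothing to prove, so assume $k\geq 1$. The hypothesis $\Delta^{\alpha}g/g\not\equiv 0$ forces $\Delta^{\alpha}g\not\equiv 0$; since $\Delta^{\alpha}=\partial_{z_{1}}^{\alpha_{1}}\cdots\partial_{z_{p}}^{\alpha_{p}}$ is a composition of commuting first-order operators, none of the intermediate partial derivatives of $g$ vanishes identically either, so its vanishing order at $t_{0}$, which I write as $\ell$, is a well-defined nonnegative integer.

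The first estimate: a single first-order partial differentiation lowers the vanishing order of a nonzero holomorphic germ at $t_{0}$ by at most one (differentiate the Taylor expansion term by term). Applying this $|\alpha|$ times to $g$ gives
\[
\ell\ \geq\ \max\{0,\ k-|\alpha|\}.
\]
The second estimate converts these multiplicities into the pole order of the quotient. Restrict $g$ and $\Delta^{\alpha}g$ to a generic affine complex line $L$ through $t_{0}$; since a generic line detects the leading homogeneous part of a germ (and one line can be chosen to work for both germs at once), $g|_{L}$ and $(\Delta^{\alpha}g)|_{L}$ are one-variable holomorphic germs with zero orders at $t_{0}$ equal to $k$ and $\ell$ respectively. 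Hence $(\Delta^{\alpha}g/g)|_{L}$ has a zero or pole of order $\ell-k$ at $t_{0}$, i.e.\ $\nu^{\infty}_{\alpha}(t_{0})=\max\{0,\ k-\ell\}$. Feeding in the first estimate,
\[
\nu^{\infty}_{\alpha}(t_{0})\ =\ \max\{0,\ k-\ell\}\ \leq\ \max\{0,\ k-\max\{0,\ k-|\alpha|\}\}\ =\ \min\{k,\ |\alpha|\},
\]
the last equality being a one-line check according to whether $k\leq|\alpha|$. This is precisely the ``straightforward computation'' advertised in the text.

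I do not expect a genuine obstacle. The only point that warrants a word of care is the meaning of the pole order of a several-variable meromorphic function at a point; reading it, as is standard here and in \cite{Fujimoto1985}, as the pole order of the restriction to a generic line keeps everything one-dimensional, after which additivity of zero orders under quotients and the differentiation estimate above do all the work. If instead one tracks the pole order along an irreducible component of the polar set of $\Delta^{\alpha}g/g$ passing through $t_{0}$, the same two estimates — applied near a generic smooth point of that component, where only differentiation transverse to it can lower the order — yield the identical bound.
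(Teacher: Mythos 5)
Your argument is correct and is precisely the ``straightforward computation'' that the paper leaves to the reference \cite{Fujimoto1985}: the two estimates $\ord_{t_0}(\Delta^{\alpha}g)\geq\max\{0,\ord_{t_0}(g)-|\alpha|\}$ and $\nu^{\infty}_{\alpha}(t_0)=\max\{0,\ord_{t_0}(g)-\ord_{t_0}(\Delta^{\alpha}g)\}$ combine exactly as you say. Your closing remark on the meaning of ``pole order'' is well taken and correctly resolved --- the difference-of-vanishing-orders (equivalently, generic-line) reading is the one forced by the way Proposition \ref{estimation of zero divisor of w} later identifies the pole order of $W^{\log}_{\mathcal{S}}$ with $\sum_i\ord_{z_0}g_{\lambda_i}-\ord_{z_0}W_{\mathcal{S}}$, and the statement would in fact fail for the vanishing order of the denominator of a coprime representation.
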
	

A collection $\mathcal{S}=\{\Delta^s\}_{0\leq s\leq n}$ of $n+1$ differential operators of the form \eqref{form of Delta alpha} is said to be {\sl admissible} if for each $0\leq s\leq n$, the order $|\Delta_s|:=i_{s_1}+\dots+i_{s_p}$ of
$$\Delta^s=
\dfrac{\partial^{i_{s_1}+\dots+i_{s_p}}}{\partial z_1^{i_{s_1}}\dots\partial z_p^{i_{s_p}}}$$   satisfies
$|\Delta_s|\leq s$. In particular when $s=0$, the operator $\Delta^0$ is the identity.

A generalized Wronskian of the family of holomorphic functions $\{f_0,\dots,f_n\}$ on $\mathbb{C}^p$ is the determinant of a square matrix of the form
$$
W_{\mathcal{S}}(f_0,f_1,\dots,f_n)
=
\det
\begin{pmatrix}
\Delta^0 (f_0)&\Delta^0{f_1}&\dots&\Delta^0(f_n)\\
\Delta^1(f_0)&\Delta^1(f_1)&\dots&\Delta^1(f_n)\\
\vdots&\vdots&\dots&\vdots\\
\Delta^n(f_0)&\Delta^n(f_1)&\dots&\Delta^n(f_n)
\end{pmatrix},
$$
where $\mathcal{S}=\{\Delta^s\}_{0\leq s\leq n}$ is an admissible family. When $p=1$, we recover the classical Wronskian. Similarly as in the one-dimensional case, generalized Wronskians enjoy the following fundamental property \cite{Rot55,Fujimoto1985,Schmidt1980,BD2010}.
\begin{thm}
\label{fundamental property of generalized wronskian about linearly independent}
Let $f_0,f_1,\dots f_n$ be holomorphic functions on $\mathbb{C}^p$. Then the family $\{f_i\}_{0\leq i\leq n}$ is linearly independent over $\mathbb{C}$ if and only if there exists a generalized Wronskian $W_{\mathcal{S}}(f_0,f_1,\dots,f_n)$ of the family $\{f_i\}_{0\leq i\leq n}$ such that $W_{\mathcal{S}}(f_0,f_1,\dots,f_n)\not\equiv 0$.
\end{thm}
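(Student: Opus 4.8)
This is a classical fact about generalized Wronskians (cf.\ the references just cited); I would prove it as follows. \emph{The routine direction} is the implication ``nonvanishing Wronskian $\Rightarrow$ independence'': if some admissible $W_{\mathcal{S}}(f_0,\dots,f_n)\not\equiv 0$ while $\sum_{i=0}^{n}c_i f_i\equiv 0$ with $(c_i)\neq 0$, then applying each $\Delta^s\in\mathcal{S}$ termwise gives $\sum_{i=0}^{n}c_i\,\Delta^s(f_i)\equiv 0$ for all $s$, so the columns of the matrix of $W_{\mathcal{S}}$ satisfy a single nontrivial linear relation at every point of $\mathbb{C}^p$ and the determinant vanishes identically, a contradiction. (Equivalently: a linear relation among the $f_i$ annihilates every generalized Wronskian.)

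For the converse, assume $f_0,\dots,f_n$ are $\mathbb{C}$-linearly independent and put $V:=\operatorname{span}_{\mathbb{C}}\{f_0,\dots,f_n\}$, so $\dim_{\mathbb{C}}V=n+1$. For $z_0\in\mathbb{C}^p$ and $\alpha\in\mathbb{N}^p$ consider the Taylor-coefficient functional $\ell^{z_0}_{\alpha}\colon V\to\mathbb{C}$, $v\mapsto\Delta^{\alpha}v(z_0)$. As the members of $V$ are entire and $\mathbb{C}^p$ is connected, an element of $V$ vanishing to infinite order at $z_0$ is identically zero, so $\{\ell^{z_0}_{\alpha}\}_{\alpha\in\mathbb{N}^p}$ spans $V^{*}$ for \emph{every} $z_0$. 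Fix $z_0$ (to be taken generic below), list $\mathbb{N}^p$ in order of nondecreasing total degree (ties arbitrary), and greedily extract a basis of $V^{*}$ from this list. This yields multi-indices $\beta^{(0)},\dots,\beta^{(n)}$ with $|\beta^{(0)}|\le\cdots\le|\beta^{(n)}|$ such that $\{\ell^{z_0}_{\beta^{(s)}}\}_{0\le s\le n}$ is a basis of $V^{*}$; expressing it in the dual of $\{f_0,\dots,f_n\}$ identifies $\big(\Delta^{\beta^{(s)}}f_i(z_0)\big)_{0\le s,i\le n}$ with an (invertible) change-of-basis matrix, so the family $\mathcal{S}:=\{\Delta^{\beta^{(0)}},\dots,\Delta^{\beta^{(n)}}\}$ has $W_{\mathcal{S}}(f_0,\dots,f_n)(z_0)\neq 0$, hence $W_{\mathcal{S}}(f_0,\dots,f_n)\not\equiv 0$.

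It remains to choose $z_0$ so that $\mathcal{S}$ is \emph{admissible}: $\beta^{(0)}=(0,\dots,0)$ as soon as some $f_i(z_0)\neq 0$, while $|\beta^{(s)}|\le s$ for all $s$ requires work. Once the greedy procedure has passed all $\alpha$ with $|\alpha|\le m$ it has kept exactly $d_m(z_0):=\dim\operatorname{span}\{\ell^{z_0}_{\alpha}:|\alpha|\le m\}$ functionals, and a short combinatorial check shows $\{|\beta^{(s)}|\le s\ \forall s\}$ is equivalent to $d_m(z_0)\ge\min\{m+1,n+1\}$ for all $m\ge 0$. Here $d_m(z_0)$ is the rank of the $m$-jet map $V\to J_m(z_0)$ (onto the space of $m$-jets at $z_0$); generically this rank equals its maximum $D_m:=\max_{z}d_m(z)$, and $D_m$ is nondecreasing with $D_0=1$ and $D_m=n+1$ for $m$ large. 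Thus it suffices to prove
\[
D_m>D_{m-1}\qquad\text{whenever }D_{m-1}<n+1,
\]
since an immediate induction then gives $D_m\ge\min\{m+1,n+1\}$, and any generic $z_0$ works.

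To prove the displayed inequality --- which is where I expect the real difficulty to lie --- suppose $D_m=D_{m-1}<n+1$. On the connected dense open set $\Omega'$ where the ranks are maximal, the kernels $W(z):=\ker\big(V\to J_{m-1}(z)\big)$ have constant dimension $r:=n+1-D_{m-1}\ge 1$ and vary holomorphically, giving a holomorphic map $\Omega'\to\operatorname{Gr}(r,V)$; and $D_m=D_{m-1}$ forces $W(z)=\ker\big(V\to J_m(z)\big)$ as well, so each $v\in W(z)$ vanishes to order $\ge m+1$ at $z$. Choose a local holomorphic frame $w_1(z),\dots,w_r(z)$ of $W(z)$, write $w_j(z)=\sum_i c_{ji}(z)f_i$, so that $\sum_i c_{ji}(z)\,\Delta^{\alpha}f_i(z)=0$ for all $|\alpha|\le m$. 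Differentiating in $z_k$ and substituting the relation with $\alpha$ replaced by the multi-index carrying one extra $z_k$-derivative (legitimate because its order is $\le m$ once $|\alpha|\le m-1$) yields $\sum_i\big(\partial_{z_k}c_{ji}\big)(z)\,\Delta^{\alpha}f_i(z)=0$ for all $|\alpha|\le m-1$, i.e.\ $\partial_{z_k}w_j(z)\in W(z)$. Hence the differential of $z\mapsto W(z)$ vanishes on $\Omega'$, so $W(z)\equiv W$ is one fixed nonzero subspace of $V$; but then any $0\neq w\in W$ vanishes on all of $\Omega'$, hence on $\mathbb{C}^p$, contradicting $w\neq 0$. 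This contradiction establishes $D_m>D_{m-1}$ and finishes the proof.
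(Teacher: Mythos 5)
Your proof is correct. Note first that the paper does not actually prove Theorem \ref{fundamental property of generalized wronskian about linearly independent}: it is quoted as a known fact with references to Roth, Schmidt, Fujimoto and Bostan--Dumas, so any complete argument is necessarily ``your own route.'' The easy direction is handled exactly as one would expect. For the converse, the classical proofs in the cited sources proceed combinatorially --- typically by induction on $n$, or by taking a nonvanishing generalized Wronskian minimizing the total order $\sum_s|\Delta^s|$ and running an exchange argument to force admissibility. Your argument is more geometric: you reduce admissibility to the chain of inequalities $d_m(z_0)\ge\min\{m+1,n+1\}$ for the generic ranks of the jet-evaluation maps $V\to J_m(z)$, and you prove the key strict increase $D_m>D_{m-1}$ (when $D_{m-1}<n+1$) by showing that otherwise the kernel bundle $W(z)=\ker(V\to J_{m-1}(z))$ is closed under differentiation of sections, hence locally constant, hence a fixed subspace of functions vanishing identically --- contradicting linear independence. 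All the steps check out: the greedy extraction does keep exactly $d_m(z_0)$ functionals of order $\le m$, the constant-rank locus is the complement of a proper analytic set (so connected, and only finitely many $m\le n$ matter for genericity), and the vanishing of the second fundamental form does force local constancy of $W(z)$ in the Grassmannian. What your approach buys is a self-contained, coordinate-free proof that moreover locates a single generic point $z_0$ at which an admissible Wronskian is already nonzero; what it does not give (and is not asked to give) is Etesse's refinement to \emph{geometric} generalized Wronskians built from full sets, which the paper imports separately as Theorem \ref{etesse observation}.
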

Therefore, the generalized Wronskians can be employed to display the linearly nondegenerate condition for holomorphic mapping $f\colon\mathbb{C}^p\rightarrow\mathbb{P}^n(\mathbb{C})$. But in higher dimensional case, one has many admissible collections of differential operators and hence, one has several choices among generalized Wronskians. Etesse \cite[Theorem 1.4.1]{Etesse2023} observed that  the generalized Wronskian in the above  Theorem can be chosen in  a smaller subfamily, called {\sl geometric generalized Wronskians}. 

Each element in an admissible family $\mathcal{S}$ is determined by a word written in the lexicographic order with the alphabet $\{1,\dots,p\}$. Identifying the family $\mathcal{S}$ with the set of its corresponding words, we say that $\mathcal{S}$ is a {\sl full set} if and only if it satisfies the following property:

\begin{center}
	if a word $s$ belongs to $\mathcal{S}$, then so does every one of its subwords.
\end{center}

\begin{defi} A geometric generalized Wronskian of the family of $n+1$ holomorphic functions $\{f_0,\dots,f_n\}$ on $\mathbb{C}^p$ is a generalized Wronskian $W_{\mathcal{S}}(f_0,f_1,\dots,f_n)$ constructed from some full set $\mathcal{S}$.	
\end{defi}

Geometric generalized Wronskians inherit all properties of the classical ones, for instance, if $g$ is a holomorphic function on $\mathbb{C}^p$, then
$$
W_{\mathcal{S}}(gf_0,gf_1,\dots,gf_n)
=
g^{n+1}
W_{\mathcal{S}}(f_0,f_1,\dots,f_n),
$$
for any full set $\mathcal{S}$. With this new notion, Theorem~\ref{fundamental property of generalized wronskian about linearly independent} is strengthened as follows.
\begin{thm}(\cite[Theorem 1.4.1]{Etesse2023})
\label{etesse observation}
Let $f_0,f_1,\dots f_n$ be holomorphic functions on $\mathbb{C}^p$. Then the family $\{f_0,f_1,\dots,f_n\}$ is linearly independent over $\mathbb{C}$ if and only if there exists some geometric generalized Wronskian $$W_{\mathcal{S}}(f_0,f_1,\dots,f_n)$$  such that $W_{\mathcal{S}}(f_0,f_1,\dots,f_n)\not\equiv 0$.
\end{thm}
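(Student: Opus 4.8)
The implication ``$\Leftarrow$'' requires no full-set hypothesis: if $\sum_{i=0}^{n}c_if_i\equiv 0$ with constants $c_i$ not all zero, then $\sum_{i=0}^{n}c_i\Delta^{\alpha}f_i\equiv 0$ for \emph{every} differential operator $\Delta^{\alpha}$, so the columns of the matrix defining $W_{\mathcal S}(f_0,\dots,f_n)$ are $\CC$-linearly dependent and $W_{\mathcal S}(f_0,\dots,f_n)\equiv 0$ for any admissible family $\mathcal S$ whatsoever. The substance of the statement is the converse, which sharpens Theorem~\ref{fundamental property of generalized wronskian about linearly independent}: that result already provides \emph{some} nonvanishing generalized Wronskian, and the task is to upgrade the admissible family occurring there to a full set.

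I would prove the converse by induction on $n$, the case $n=0$ being the tautology $f_0\not\equiv 0$. Suppose $f_0,\dots,f_n$ are $\CC$-linearly independent; then so are $f_0,\dots,f_{n-1}$, and by the inductive hypothesis there is a full set $\mathcal S'$ with $|\mathcal S'|=n$ and $W':=W_{\mathcal S'}(f_0,\dots,f_{n-1})\not\equiv 0$ — among all such $\mathcal S'$ one should make a judicious choice, see below. Let $B$ be the set of multi-indices $\beta\notin\mathcal S'$ for which $\mathcal S'\cup\{\beta\}$ is again a full set (equivalently, the minimal elements of $\NN^p\setminus\mathcal S'$); one checks that $\mathcal S'':=\mathcal S'\cup B$ is itself a full set. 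Assume, for contradiction, that $W_{\mathcal S'\cup\{\beta\}}(f_0,\dots,f_n)\equiv 0$ for every $\beta\in B$. Since each such $(n+1)\times(n+1)$ determinant vanishes while its $n\times n$ minor on the rows $\mathcal S'$ and the columns $f_0,\dots,f_{n-1}$ equals $W'\not\equiv 0$, the $\Delta^{(\cdot)}f_n$–column is a linear combination of the other columns; solving on the connected dense open set where $W'$ does not vanish produces meromorphic functions $\lambda_0,\dots,\lambda_{n-1}$ — the same for all $\beta$, as they are determined by the $\mathcal S'$–rows alone — with
\[
\Delta^{\alpha}f_n=\sum_{j=0}^{n-1}\lambda_j\,\Delta^{\alpha}f_j\qquad\text{for every }\alpha\in\mathcal S''.
\]
In particular $f_n=\sum_j\lambda_jf_j$. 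One then wants to propagate this identity from $\mathcal S''$ to all of $\NN^p$; granting that, differentiating it by $\partial_k$ and subtracting the instance at $\alpha+e_k$ yields $\sum_j(\partial_k\lambda_j)\,\Delta^{\alpha}f_j\equiv 0$ for all $\alpha$ and $k$, whence, letting $\alpha$ range over $\mathcal S'$ and using $W'\not\equiv 0$ again, $\partial_k\lambda_j\equiv 0$ for all $j,k$. Thus the $\lambda_j$ are constants, and $f_n=\sum_j\lambda_jf_j$ contradicts the linear independence of $f_0,\dots,f_n$. Hence some extension $\mathcal S'\cup\{\beta\}$ satisfies $W_{\mathcal S'\cup\{\beta\}}(f_0,\dots,f_n)\not\equiv 0$, closing the induction.

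The genuine obstacle is the propagation step. In general $\mathcal S''=\mathcal S'\cup B$ is \emph{not} stable under $\alpha\mapsto\alpha+e_k$ for $\alpha\in\mathcal S'$ — a maximal element of $\mathcal S'$ in the direction $k$ may have $\alpha+e_k\notin\mathcal S''$ — so the identity on $\mathcal S''$ does not by itself furnish enough differentiated relations; pushing it through forces both a careful choice of $\mathcal S'$ among the full sets of size $n$ with $W_{\mathcal S'}(f_0,\dots,f_{n-1})\not\equiv 0$ and a delicate induction on the total degree $|\alpha|$. This is exactly the point at which \emph{full} sets, rather than arbitrary admissible families, are indispensable: with an admissible family the corresponding bookkeeping collapses and one recovers only Theorem~\ref{fundamental property of generalized wronskian about linearly independent}. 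An alternative, and perhaps more transparent, route is to restrict $f_0,\dots,f_n$ to a generic parametrized curve $t\mapsto\phi(t)$ in $\CC^p$, apply the classical one–variable Wronskian criterion to $f_0\circ\phi,\dots,f_n\circ\phi$, and expand $\wron(f_0\circ\phi,\dots,f_n\circ\phi)$ by multilinearity of the determinant together with the Fa\`a di Bruno formula; this presents it as a $\CC$–linear combination of generalized Wronskians $W_{\mathcal S}(f_0,\dots,f_n)$, and the problem becomes to choose $\phi$ so that only full sets $\mathcal S$ occur, or so that the full-set contributions cannot all cancel. As this is precisely the content of \cite[Theorem 1.4.1]{Etesse2023}, we refer there for the complete proof.
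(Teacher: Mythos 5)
The paper offers no proof of Theorem~\ref{etesse observation}: it is imported verbatim from \cite[Theorem 1.4.1]{Etesse2023}, so your concluding deferral to that reference coincides exactly with the paper's own treatment. Your argument for the easy implication (a linear dependence relation kills every generalized Wronskian, full set or not) is correct, and your diagnosis of where the hard implication becomes delicate --- the failure of $\mathcal{S}''=\mathcal{S}'\cup B$ to be stable under $\alpha\mapsto\alpha+e_k$, which blocks the propagation needed to force the $\lambda_j$ to be constant --- is accurate; but since that step is left open, the substantive content of the theorem still rests entirely on the citation, exactly as in the paper.
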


Applying this result to study holomorphic mappings from $\mathbb{C}^p$ with maximal rank, we obtain:
\begin{thm}
	\label{generalized wronskian for maximal rank holo mapping Cp to CPn}
Let $f\colon\mathbb{C}^p\rightarrow\mathbb{P}^n(\mathbb{C})$ be a linearly nondegenerate holomorphic mapping of maximal rank and let $[f_0:f_1:\dots:f_n]$ be a reduced representation of $f$. Then there exists an admissible family $\mathcal{S}=\{\Delta^s\}_{0\leq s\leq n}$ containing at least $p$ differential operators of order $1$ such that $W_{\mathcal{S}}(f_0,f_1,\dots,f_n)\not\equiv 0$. 
\end{thm}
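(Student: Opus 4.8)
The plan is to build $\mathcal{S}$ greedily: one starts with the identity together with the $p$ first-order operators supplied by the maximal-rank hypothesis, and then enlarges the family one operator at a time, the classical generalized Wronskian theorem guaranteeing that each enlargement can be performed while keeping the family admissible.

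First I would reformulate the maximal-rank hypothesis. Write $F=(f_0,\dots,f_n)\colon\mathbb{C}^p\to\mathbb{C}^{n+1}\setminus\{0\}$ and let $\pi\colon\mathbb{C}^{n+1}\setminus\{0\}\to\mathbb{P}^n(\mathbb{C})$ be the canonical projection, so that $f=\pi\circ F$. Since $\ker d\pi_w=\mathbb{C}w$ for every $w\neq0$ and $df_z=d\pi_{F(z)}\circ dF_z$, the condition $\rank(df_z)=p$ at a point $z$ is equivalent to the $\mathbb{C}$-linear independence in $\mathbb{C}^{n+1}$ of the $p+1$ vectors $F(z),\ \partial F/\partial z_1(z),\ \dots,\ \partial F/\partial z_p(z)$. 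As $f$ has maximal rank, such a point $z_0$ exists, so the $(p+1)\times(n+1)$ matrix whose rows are $(\Delta^{\alpha}f_0,\dots,\Delta^{\alpha}f_n)$ for $\alpha\in\{0,e_1,\dots,e_p\}$ (with $e_i$ the $i$-th standard multi-index, so $\Delta^{e_i}=\partial/\partial z_i$) has rank $p+1$ at $z_0$; consequently one of its maximal minors is a not-identically-zero holomorphic function, i.e. these $p+1$ rows are linearly independent over the field $\mathcal{M}$ of meromorphic functions on $\mathbb{C}^p$. If $p=n$ this already proves the theorem, with $\mathcal{S}=\{\mathrm{id},\partial/\partial z_1,\dots,\partial/\partial z_p\}$, so assume from now on $1\le p<n$.

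Writing $\Delta^{\alpha}f:=(\Delta^{\alpha}f_0,\dots,\Delta^{\alpha}f_n)\in\mathcal{M}^{n+1}$ for any differential operator $\Delta^{\alpha}$, I would construct operators $\Delta^0,\dots,\Delta^n$ inductively so that: $\Delta^0=\mathrm{id}$; $\Delta^{i}=\partial/\partial z_i$ for $1\le i\le p$; each $\Delta^{i+1}$ equals $(\partial/\partial z_j)\circ\Delta^{k}$ for some $0\le k\le i$ and some $1\le j\le p$; $|\Delta^{m}|\le m$ for every $m$; and $\Delta^{0}f,\dots,\Delta^{i}f$ are $\mathcal{M}$-linearly independent. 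The previous paragraph provides the base case $i=p$, with $|\Delta^{i}|=1\le i$ since $i\ge1$. For the inductive step, let $U\subseteq\mathcal{M}^{n+1}$ be the $\mathcal{M}$-span of $\Delta^{0}f,\dots,\Delta^{i}f$, so $\dim_{\mathcal{M}}U=i+1\le n$. If no admissible extension worked, then $(\partial/\partial z_j)(\Delta^{k}f)\in U$ for all $0\le k\le i$ and $1\le j\le p$; differentiating an arbitrary combination $\sum_{k}g_k(\Delta^{k}f)$ via the Leibniz rule yields $\sum_k(\partial_jg_k)(\Delta^{k}f)+\sum_kg_k(\partial_j\Delta^{k}f)\in U$, so $U$ is stable under every $\partial/\partial z_j$. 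Since $U$ contains $\Delta^{0}f=(f_0,\dots,f_n)$, it then contains $\Delta^{\gamma}f$ for every multi-index $\gamma$, whence at most $i+1\le n$ of the vectors $\{\Delta^{\gamma}f\}_{\gamma}$ are $\mathcal{M}$-independent. But $f$ is linearly nondegenerate, so Theorem~\ref{etesse observation} provides a generalized Wronskian $W_{\mathcal{S}'}(f_0,\dots,f_n)\not\equiv0$, i.e. $n+1$ of the $\Delta^{\gamma}f$ are $\mathcal{M}$-independent — a contradiction. Hence a valid $\Delta^{i+1}$ exists; it has order $|\Delta^{k}|+1\le i+1$ because $|\Delta^{k}|\le k\le i$, and $\Delta^{i+1}f\notin U$, so the induction proceeds to $i=n$.

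Setting $\mathcal{S}=\{\Delta^0,\dots,\Delta^n\}$ we obtain an admissible family (since $|\Delta^{m}|\le m$) containing the $p$ operators $\partial/\partial z_1,\dots,\partial/\partial z_p$ of order $1$, and the $\mathcal{M}$-linear independence of $\Delta^{0}f,\dots,\Delta^{n}f$ says exactly that $W_{\mathcal{S}}(f_0,\dots,f_n)=\det(\Delta^{i}f_j)_{0\le i,j\le n}\not\equiv0$. The two real ingredients are the differential-geometric reformulation of the maximal-rank hypothesis, which yields the base case, and the classical generalized Wronskian theorem, which — through the observation that a $\partial$-stable $\mathcal{M}$-subspace of $\mathcal{M}^{n+1}$ containing $(f_0,\dots,f_n)$ must be the whole span of all $\Delta^{\gamma}f$ — is what makes the enlargement step always possible; I expect the only delicate point to be the bookkeeping that keeps each order $|\Delta^{i}|$ at most $i$. (If in addition one wanted $\mathcal{S}$ to be a \emph{full} set, one would run instead the construction underlying Theorem~\ref{etesse observation} seeded with the full set $\{\mathrm{id},\partial/\partial z_1,\dots,\partial/\partial z_p\}$, the extra issue being to keep each enlargement full; but this is not needed for the stated result.)
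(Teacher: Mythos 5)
Your proof is correct, but it follows a genuinely different route from the paper's. The paper first invokes Etesse's Theorem~\ref{etesse observation} to produce a \emph{full} admissible set $\mathcal{S}$ with $W_{\mathcal{S}}(f_0,\dots,f_n)\not\equiv0$, and then argues by contradiction that such a full set must contain at least $p$ order-one operators: at a maximal-rank point one chooses local coordinates in which $p$ of the $f_i$ become coordinate functions, and if some $\partial/\partial z_j$ were missing from $\mathcal{S}$ then, by fullness, no operator of $\mathcal{S}$ would involve $z_j$, so the corresponding column of the Wronskian matrix would be proportional to the column of $f_0$, forcing $W_{\mathcal{S}}\equiv0$. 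You instead build $\mathcal{S}$ constructively: the reformulation of maximal rank as $\mathcal{M}$-linear independence of $F,\partial F/\partial z_1,\dots,\partial F/\partial z_p$ seeds the family with the identity and the $p$ first-order operators, and the extension step always succeeds because a $\partial$-stable proper $\mathcal{M}$-subspace containing $(f_0,\dots,f_n)$ would kill every generalized Wronskian and hence contradict linear nondegeneracy already via the classical Theorem~\ref{fundamental property of generalized wronskian about linearly independent} (Etesse's refinement is not needed). Your version buys two things: it avoids both the geometric-generalized-Wronskian machinery and the local change of coordinates on the source (which in the paper's argument tacitly alters the differential operators defining the Wronskian), and it automatically yields the sharper bound $|\Delta^s|\le\max\{s-p+1,1\}\le n+1-p$ on each operator --- precisely the property invoked at the start of Section~\ref{proof of the Main Theorem}, which for a merely admissible, non-full family with $p$ order-one operators does not follow from the statement of the theorem alone. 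What it gives up is the fullness of $\mathcal{S}$, which the statement does not require and the rest of the paper does not use. The only points worth spelling out are the standard facts that the rows of a matrix of holomorphic functions are $\mathcal{M}$-dependent if and only if all maximal minors vanish identically, and that the maximal-rank point can be chosen outside the common zero locus of the reduced representation.
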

\begin{proof}
	Let $X_0\in\mathbb{C}^p$ be a point where $df$ attains the maximal rank. After a translation, one may assume that $X_0=0$. Suppose that around $f(X_0)$, the map $f$ is given as $f=[1:f_1:\dots:f_n]$. Since this is a local problem, after a biholomorphism between  open neighborhoods of $0$, one can suppose that in some local coordinates $z_1,\dots,z_p$ of $\mathbb{C}^p$ around $0$, there is an index subset $\{i_1,\dots,i_p\}$ of $\{1,\dots,n\}$ such that $\{f_{i_1},\dots,f_{i_p}\}=\{z_1,\dots,z_p\}$. Using Theorem \ref{etesse observation}, there exists an admissible full set $\mathcal{S}=\{\Delta^s\}_{0\leq s\leq n}$ such that $W_{\mathcal{S}}(f_0,f_1,\dots,f_n)\not\equiv 0$. Suppose on the contrary that $\mathcal{S}$ contains at most $p-1$ differential operators of order $1$, then one can find some index $i_j$  such that the $i_j$th column of the matrix

$$ \begin{pmatrix}
	\Delta^0 (f_0)&\Delta^0({f_1})&\dots&\Delta^0(f_n)\\
	\Delta^1(f_0)&\Delta^1(f_1)&\dots&\Delta^1(f_n)\\
	\vdots&\vdots&\dots&\vdots\\
	\Delta^n(f_0)&\Delta^n(f_1)&\dots&\Delta^n(f_n)
\end{pmatrix}$$
is of the form
$$
 \begin{pmatrix}
z_{i_j}\\
0\\
\vdots\\
0
\end{pmatrix},
$$
and hence it is proportional to the first column. By the alternating property of the determinant, this yields $$
W_{\mathcal{S}}(f_0,f_1,\dots f_n)\equiv 0,
$$
a contradiction. Thus  $\mathcal{S}$ contains at least $p$ differential operators of order $1$, as required.
\end{proof}
\section{Proof of the Main Theorem}
	\label{proof of the Main Theorem}
	
\subsection{Notation and conventions}
Fix a reduced representation $[f_0:\dots:f_n]$ of $f$.
	Denote by $Q=\{1,\dots,q\}$ the index set. Assume that the  hyperplanes $H_1, \dots, H_q$ are defined by linear forms  $H_1^*, \dots, H_q^*$ given by
	$$
	H_i^*(\omega)=\sum_{j=0}^{n}a_{ij}\omega_j\qquad(\omega=(\omega_0,\omega_1,\dots,\omega_n)\in\mathbb{C}^{n+1}),
	$$
normalized so that	
$$
\|H^*_i\|
=
\sqrt{\sum_{j=0}^{n}|a_{ij}|^2}
=
1
\eqno
\scriptstyle{(1\,\leq\,i\,\leq\,q)}.
$$  

Since $f$ is linearly nondegenerate, by Theorem~\ref{generalized wronskian for maximal rank holo mapping Cp to CPn}, there exists an admissible family $\mathcal{S}=\{\Delta^s\}_{0\leq s\leq n}$ containing at least $p-1$ differential operators of order $1$ such that $W_{\mathcal{S}}(f_0,f_1,\dots,f_n)\not\equiv 0$. Hence each operator in this family has order at most $n+1-p$.

Put 
$$
g_i
=
\sum_{j=0}^{n}a_{i_j}f_j,\qquad (i\in Q),
$$
then for any subset of $n+1$ indexes $R\subset Q$, and for the above admissible set $\mathcal{S}=\{\Delta^s\}_{0\leq s\leq n}$, one has
\begin{equation}
\label{generalized wronskian of f and g compared}
W_{\mathcal{S}}(\{g_i\}_{i\in R})
=A_{R}
W_{\mathcal{S}}(f_0,f_1,\dots,f_n),
\end{equation}
where $$A_{R}=\det\big(\big(a_{ij}\big)_{i\in R,\, 0\leq j\leq n}\big)\not=0,$$
since $H_1,\dots, H_q$ are in general position. We also consider the logarithmic Wronskian $W^{\log}_{\mathcal{S}}(\{g_i\}_{i\in R})$ of the family $\{g_i\}_{i\in R}$ with respect to $\mathcal{S}$ defined as
$$
W^{\log}_{\mathcal{S}}(\{g_i\}_{i\in R})
=\det \begin{pmatrix}
1&1&\dots&1\\
\dfrac{\Delta^1(f_0)}{f_0}&\dfrac{\Delta^1(f_1)}{f_1}&\dots&\dfrac{\Delta^1(f_n)}{f_n}\\
\vdots&\vdots&\dots&\vdots\\
\dfrac{\Delta^n(f_0)}{f_0}&\dfrac{\Delta^n(f_1)}{f_1}&\dots&\dfrac{\Delta^n(f_n)}{f_n}
\end{pmatrix}.$$

\subsection{An a priori estimate}
	
Here is an implement of Cartan's Wronskian technique for holomorphic mapping.
	
\begin{pro}
There exists some constant $K>0$ depending only on the family $\{H_i\}_{i\in Q}$ such that
		\begin{align}
		\label{estimation of norm, statement}
		\|f(z)\|_{\max}^{q-n-1}
		\leq
		K
		\cdot
		\bigg(
		\dfrac{\prod_{i\in Q}| g_i(z)|}{|W_{\mathcal{S}}(f_0,f_1,\dots,f_n)(z)|}
		\bigg)
		\times
		\sum_{R\subset Q,\,|R|=n+1}
		\big|W^{\log}_{\mathcal{S}}(\{g_i\}_{i\in R})(z)\big|,\quad (z\in\mathbb{C}^p).
		\end{align}
	\end{pro}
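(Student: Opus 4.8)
The plan is to adapt Cartan's classical argument for the pointwise estimate to the higher-dimensional setting, using the geometric generalized Wronskian in place of the ordinary one. Fix a point $z \in \mathbb{C}^p$. Since the hyperplanes are in general position, at $z$ we can order the indices so that $|g_1(z)| \leq |g_2(z)| \leq \cdots \leq |g_q(z)|$, and the $n+1$ smallest values $|g_1(z)|, \dots, |g_{n+1}(z)|$ come from linear forms whose coefficient vectors are linearly independent; hence the linear system expressing $f_0(z),\dots,f_n(z)$ in terms of $g_1(z),\dots,g_{n+1}(z)$ is invertible with inverse bounded by a constant depending only on $\{H_i\}$. This gives $\|f(z)\|_{\max} \leq C \cdot \max\{|g_1(z)|,\dots,|g_{n+1}(z)|\} = C |g_{n+1}(z)|$ (up to a constant), so that $\|f(z)\|_{\max} \leq C' |g_i(z)|$ for all $i \geq n+1$, i.e.\ for at least $q - n$ of the indices.

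Next I would express $\|f(z)\|_{\max}^{q-n-1}$ by keeping the single copy $\|f(z)\|_{\max}$ that we will bound via the Wronskian, and bounding the remaining $q-n-2$ copies crudely; more precisely, choose the set $R = R(z)$ of the $n+1$ indices realizing the smallest $|g_i(z)|$, so that for $i \notin R$ we have $\|f(z)\|_{\max} \leq C' |g_i(z)|$. Thus
\begin{align*}
\|f(z)\|_{\max}^{q-n-1}
\leq
(C')^{q-n-1}\,\|f(z)\|_{\max}\,\prod_{i\notin R}|g_i(z)|
=
(C')^{q-n-1}\,\|f(z)\|_{\max}\,
\frac{\prod_{i\in Q}|g_i(z)|}{\prod_{i\in R}|g_i(z)|}.
\end{align*}
It then remains to show that $\|f(z)\|_{\max} / \prod_{i\in R}|g_i(z)|$ is controlled by $|W^{\log}_{\mathcal{S}}(\{g_i\}_{i\in R})(z)| / |W_{\mathcal{S}}(f_0,\dots,f_n)(z)|$. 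This is where identity \eqref{generalized wronskian of f and g compared} enters: using the multilinearity of the determinant and the change of variables $g_i = \sum_j a_{ij} f_j$, one has $W_{\mathcal{S}}(\{g_i\}_{i\in R}) = A_R\, W_{\mathcal{S}}(f_0,\dots,f_n)$, and factoring $g_i$ out of the $i$th column of the matrix defining $W_{\mathcal{S}}(\{g_i\}_{i\in R})$ produces exactly $\prod_{i\in R} g_i$ times a logarithmic Wronskian $W^{\log}_{\mathcal{S}}(\{g_i\}_{i\in R})$ built from the ratios $\Delta^s(g_i)/g_i$. Combining, $\prod_{i\in R}|g_i(z)| = |A_R|\,|W_{\mathcal{S}}(f_0,\dots,f_n)(z)| / |W^{\log}_{\mathcal{S}}(\{g_i\}_{i\in R})(z)|$, so that
\begin{align*}
\frac{\|f(z)\|_{\max}}{\prod_{i\in R}|g_i(z)|}
\leq
\frac{\|f(z)\|_{\max}}{|A_R|}\cdot
\frac{|W^{\log}_{\mathcal{S}}(\{g_i\}_{i\in R})(z)|}{|W_{\mathcal{S}}(f_0,\dots,f_n)(z)|}.
\end{align*}
Here $\|f(z)\|_{\max}$ should be absorbed by recognizing that the stated $W^{\log}_{\mathcal{S}}$ in the proposition is actually the logarithmic Wronskian in terms of the $f_j/\|f\|$-type normalization, or by bounding $\|f(z)\|_{\max}\cdot|W^{\log}_{\mathcal{S}}(\{g_i\}_{i\in R})(z)|$ against $|W_{\mathcal{S}}(\{g_i\}_{i\in R})(z)|$ times entries that are themselves logarithmic derivatives — so one must be careful about which normalization of the log-Wronskian is intended; I would reconcile this by dividing the top row of the $g$-Wronskian matrix by a common factor, as is standard, and take $K$ to incorporate $\sum_R 1/|A_R|$ and the constant $(C')^{q-n-1}$.

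Finally, since the set $R(z)$ depends on $z$ but is always one of the finitely many $(n+1)$-element subsets of $Q$, replacing the single term $|W^{\log}_{\mathcal{S}}(\{g_i\}_{i\in R(z)})(z)|$ by the full sum $\sum_{R\subset Q,\,|R|=n+1}|W^{\log}_{\mathcal{S}}(\{g_i\}_{i\in R})(z)|$ yields the uniform bound \eqref{estimation of norm, statement}. The main obstacle I anticipate is the bookkeeping in the middle step: correctly matching the normalization of the logarithmic Wronskian appearing in the statement (which is written with denominators $f_j$, not $g_i$) with the one that arises naturally from factoring the columns of $W_{\mathcal{S}}(\{g_i\}_{i\in R})$, and checking that the leftover power of $\|f\|_{\max}$ is indeed accounted for; a secondary point is verifying that the admissibility of $\mathcal{S}$ (equivalently, that each $\Delta^s$ has order $\leq s \leq n$) is what makes all entries genuine iterated logarithmic-type derivatives so that Proposition \ref{vanishing order estimate and pole of logarithmic derivative} can later be applied to control poles — though that pole control is only needed in the subsequent integration step, not here.
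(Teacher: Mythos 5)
Your strategy is essentially correct and is a genuinely different (and more classical) route than the paper's: you run Cartan's pointwise argument --- order the values $|g_i(z)|$, use general position to invert the map $(f_0,\dots,f_n)\mapsto (g_i)_{i\in R}$ for the set $R=R(z)$ of the $n+1$ smallest values, and conclude $\|f(z)\|_{\max}\le C'|g_i(z)|$ for every $i\notin R(z)$ --- whereas the paper follows Noguchi--Winkelmann and derives the same product inequality from a two-sided compactness bound on $\sum_{|S|=q-n-1}\prod_{i\in S}|H_i^*(Z)|/\|Z\|$ over all of $\mathbb{P}^n(\mathbb{C})$. Both approaches then feed into the identical algebra: the identity \eqref{generalized wronskian of f and g compared} together with factoring $g_i$ out of each column. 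Your version is arguably more elementary; the paper's avoids having to track which subset $R$ is selected at each point (though you handle that correctly by summing over all $R$ at the end).

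The one concrete defect is a miscount in your middle display, and it is precisely the thing you flag as your ``main obstacle.'' The complement $Q\setminus R(z)$ has exactly $q-n-1$ elements, and you have exactly $q-n-1$ copies of $\|f(z)\|_{\max}$ to estimate; pairing each copy with a distinct $i\notin R(z)$ gives
\[
\|f(z)\|_{\max}^{\,q-n-1}
\;\le\;
(C')^{\,q-n-1}\prod_{i\notin R(z)}|g_i(z)|
\;=\;
(C')^{\,q-n-1}\,\frac{\prod_{i\in Q}|g_i(z)|}{\prod_{i\in R(z)}|g_i(z)|},
\]
with \emph{no} leftover factor of $\|f(z)\|_{\max}$. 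The extra copy you retain is spurious (and your written inequality is internally inconsistent: you announce bounding $q-n-2$ copies but multiply by $(C')^{q-n-1}$ and a product over all $q-n-1$ indices outside $R$). Consequently the entire discussion about ``absorbing'' a residual $\|f\|_{\max}$ into a renormalized logarithmic Wronskian is chasing a problem that does not exist. Your instinct about the normalization is nonetheless justified: the paper's displayed definition of $W^{\log}_{\mathcal{S}}$ contains a typo ($f_j$ in the denominators where $g_{\lambda_j}$ is meant), and the intended convention, as its later use in the vanishing-order and proximity estimates confirms, is $W^{\log}_{\mathcal{S}}(\{g_i\}_{i\in R})=W_{\mathcal{S}}(\{g_i\}_{i\in R})/\prod_{i\in R}g_i$. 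With that convention, $\prod_{i\in R}|g_i(z)|=|A_R|\,|W_{\mathcal{S}}(f_0,\dots,f_n)(z)|\,/\,|W^{\log}_{\mathcal{S}}(\{g_i\}_{i\in R})(z)|$, and substituting this into the corrected display, then enlarging the single term to the sum over all $(n+1)$-subsets and taking $K=(C')^{q-n-1}\max_R|A_R|^{-1}$, completes the proof. Your closing remark is also correct: the admissibility of $\mathcal{S}$ and Proposition \ref{vanishing order estimate and pole of logarithmic derivative} play no role in this a priori estimate and are only needed later.
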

	
\begin{proof} We follow the arguments in \cite[page~125, Lemma 4.2.3]{Noguchi-Winkelmann2014}.
Since  the family $\{H_i\}_{i\in Q}$ is in general position, for any point $Z\in \mathbb{P}^{n}(\mathbb{C})$,  there exists some index subset $S\subset Q$ with cardinality $|S|=q-n-1$ such that all the corresponding hyperplanes in the family $\{H_i\}_{i\in Q}$ miss $\omega$, namely
$\prod_{i\in S}\frac{|H_i^*(Z)|}{\|Z\|}\not=0		$. Consequently, by compactness argument, there exists some constant $C_1>0$ depending only on $\{H_i\}_{i\in Q}$ such that
\begin{align}
\label{estimate norm1}
\dfrac{1}{C_1}
<
\sum_{S\subset Q,\,|S|=q-n-1}
\prod_{i\in S}\bigg(\dfrac{|H_i^*(Z)|}{\|Z\|}\bigg)
<
C_1
\qquad\qquad\qquad
\scriptstyle{(\forall\,Z\,\in\,\mathbb{P}^{n}(\mathbb{C})).}
\end{align}
Putting $C(S)=Q\setminus S$, then each term in the middle of the above inequality can be rewritten as
\begin{equation}
\label{rewrite 1}
\prod_{i\in S}\bigg(\dfrac{|H_i^*(Z)|}{\|Z\|}\bigg)
=
\dfrac{\prod_{i\in Q}|H_i^*(Z)|}{\|Z\|^{q-n-1}}
\cdot
\prod_{i\in C(S)}\dfrac{1}{|H_i^*(Z)|}.
\end{equation}

Taking the sum on both sides of the above equality for all $S$ and using the lower bound of~\eqref{estimate norm1}, we receive
\[
\|Z\|^{q-n-1}
\leq
C_1
\cdot
\bigg(\prod_{i\in Q}|H_i^*(Z)|\bigg)
\cdot
\sum_{S\subset Q,\,|S|=q-n-1}
\dfrac{1}{\prod_{i\in C(S)}|H_i^*(Z)|}.
\]
Substituting $Z$ by $f(z)$ in the above inequality and noting that $\|f|_{\max}\leq \|f\|$, one obtains
\begin{align}
\|f(z)\|_{\max}^{q-n-1}
&\leq
C_1
\cdot
\bigg(\prod_{i\in Q}|A_i^*\circ f(z)|\bigg)
\cdot
\sum_{S\subset Q,\,|S|=q-n-1}
\dfrac{1}{\prod_{i\in C(S)}|H_i^*\circ f(z)|}\notag\\
&=
C_1
\cdot
\bigg(
\dfrac{\prod_{i\in Q}|g_i(z)|}{|W_{\mathcal{S}}(f_0,f_1,\dots,f_n)(z)|}
\bigg)\times\sum_{S\subset Q,\,|S|=q-n-1}
\dfrac{|W_{\mathcal{S}}(f_0,f_1,\dots,f_n)(z)|\,
}{\prod_{i\in C(S)}|g_i(z)|}\notag\\
&=
C_1
\cdot
\bigg(
\dfrac{\prod_{i\in Q}|g_i(z)|}{|W_{\mathcal{S}}(f_0,f_1,\dots,f_n)(z)|}
\bigg)\times
\sum_{R\subset Q,\,|R|=n+1}
\dfrac{|W_{\mathcal{S}}(\{g_i\}_{i\in R})(z)|\,
}{|A_R|\prod_{i\in R}|g_i(z)|}\notag\\
&\leq
K
\cdot
\bigg(
\dfrac{\prod_{i\in Q}|g_i(z)|}{|W_{\mathcal{S}}(f_0,f_1,\dots,f_n)(z)|}
\bigg)\times
\sum_{R\subset Q,\,|R|=n+1}
\dfrac{|W_{\mathcal{S}}(\{g_i\}_{i\in R})(z)|\,
}{\prod_{i\in R}|g_i(z)|},\notag
\end{align}
where $K=C_1.\max\{\dfrac{1}{|A_R|}: R\subset Q, |R|=n+1\}$, which yields the desired inequality.
\end{proof}

\subsection{A vanishing order estimate}
	
\begin{pro}
\label{estimation of zero divisor of w}
The following inequality  holds:
\begin{align}
\label{divisor inequality statement}
\sum_{i\in Q}(g_i)_0-(W_{\mathcal{S}}(f_0,f_1,\dots,f_n))_0
\leq
\sum_{i\in Q}
\sum_{z\in\mathbb{C}^p}\min\{\ord_z g_i,n+1-p\}\{z\}.
\end{align}
\end{pro}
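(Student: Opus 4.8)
The plan is to deduce \eqref{divisor inequality statement} from a coefficient-wise estimate: it suffices to show that for every point $z_0\in\mathbb{C}^p$ one has
\begin{equation*}
\ord_{z_0}W_{\mathcal{S}}(f_0,\dots,f_n)\;\geq\;\sum_{i\in Q}\max\bigl\{\ord_{z_0}g_i-(n+1-p),\,0\bigr\}.
\end{equation*}
Indeed, adding $\sum_{i\in Q}\ord_{z_0}g_i$ to both sides and using the identity $t-\max\{t-m,0\}=\min\{t,m\}$ turns this into exactly the inequality of coefficients at $z_0$ asserted in \eqref{divisor inequality statement}. So the whole argument becomes local, and we may fix $z_0$ once and for all.

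First I would pin down which $g_i$ matter at $z_0$, by the same observation Cartan uses. Since $[f_0:\dots:f_n]$ is a \emph{reduced} representation, the point $f(z_0)\in\mathbb{P}^n(\mathbb{C})$ is well defined, and since $\{H_i\}_{i\in Q}$ is in general position no $n+1$ of these hyperplanes pass through $f(z_0)$; hence $R_0:=\{\,i\in Q:g_i(z_0)=0\,\}$ has $|R_0|\leq n$, and every $i\notin R_0$ contributes $0$ to the right-hand side above. Because $q\geq n+2$ I can choose $R$ with $R_0\subseteq R\subseteq Q$ and $|R|=n+1$, and by \eqref{generalized wronskian of f and g compared} the function $W_{\mathcal{S}}(f_0,\dots,f_n)$ differs from $W_{\mathcal{S}}(\{g_i\}_{i\in R})$ only by the nonzero constant $A_R$, so the two have the same vanishing order at $z_0$. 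It thus remains to bound $\ord_{z_0}W_{\mathcal{S}}(\{g_i\}_{i\in R})$ from below.

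The crucial input — the one responsible for the improved truncation level — is that by Theorem~\ref{generalized wronskian for maximal rank holo mapping Cp to CPn} every operator $\Delta^s$ in the admissible family $\mathcal{S}$ has order $|\Delta_s|\leq n+1-p$. Combined with the elementary fact that differentiating a holomorphic function $d$ times lowers its order of vanishing by at most $d$ (equivalently, Proposition~\ref{vanishing order estimate and pole of logarithmic derivative}), each entry $\Delta^s g_i$ of the $i$-th column of the generalized Wronskian matrix vanishes at $z_0$ to order at least $e_i:=\max\{\ord_{z_0}g_i-(n+1-p),\,0\}$ (and for $i\notin R_0$ this is vacuous, as $e_i=0$). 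Now I would expand $W_{\mathcal{S}}(\{g_i\}_{i\in R})$ by the Leibniz formula: every monomial in the expansion is a product containing exactly one entry from each column, hence vanishes at $z_0$ to order at least $\sum_{i\in R}e_i=\sum_{i\in Q}e_i$; since the order of a nonzero sum is at least the minimum of the orders of its terms, the determinant itself vanishes to order at least $\sum_{i\in Q}e_i$, which is the displayed inequality.

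The conceptual heart — the bound $|\Delta_s|\leq n+1-p$, where the maximal-rank hypothesis and Etesse's geometric Wronskians enter — is already settled in Section~\ref{section: preparation}, so what is left here is essentially bookkeeping. The one technical point to be careful about is that, unlike in one variable, one cannot factor a power of $(z-z_0)$ out of a column of the Wronskian matrix; the order estimate therefore has to be run through the Leibniz expansion term by term rather than by column operations. That, together with checking $|R_0|\leq n$ and the order-of-vanishing inequality for derivatives, is the entire content of the proof.
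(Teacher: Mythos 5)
Your proposal is correct and follows essentially the same route as the paper: reduce to a pointwise inequality at $z_0$, use general position to pass to a subset $R$ of $n+1$ indices via \eqref{generalized wronskian of f and g compared}, and then exploit the bound $|\Delta_s|\leq n+1-p$ from Theorem~\ref{generalized wronskian for maximal rank holo mapping Cp to CPn} together with the fact that applying $\Delta^{\alpha}$ drops the vanishing order by at most $|\alpha|$. The only cosmetic difference is that you bound $\ord_{z_0}W_{\mathcal{S}}$ from below directly through the Leibniz expansion, whereas the paper phrases the same computation as an upper bound on the pole order of the logarithmic Wronskian via Proposition~\ref{vanishing order estimate and pole of logarithmic derivative}.
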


\begin{proof}
First, since this is a pointwise inequality, therefore it's enough to prove that, for every fixed $z_0\in\mathbb{C}^p$, one has

\begin{align}
\label{estimate zero order of W, reduced, poniwise estimate}
\sum_{i\in Q}\ord_{z_0} g_i-
\ord_{z_0}W_{\mathcal{S}}(f_0,f_1,\dots,f_n)
\leq
\sum_{i\in Q}\min\{\ord_{z_0} g_i,n+1-p\}.
\end{align}	

Since the family $\{H_i\}_{1\leq i\leq q}$ is in general position, there exists an index subset $R=\{\lambda_0,\dots,\lambda_n\}\subset Q$ of cardinality $|R|=n+1$ such that $g_i(z_0)\not=0 $ for all $i\notin R$. Using \eqref{generalized wronskian of f and g compared}, the above inequality can be rewritten as
\begin{align}
\label{estimate zero order of W, reduced, poniwise estimate, simplify to set R of cardinality n+1}
\sum_{i=0}^n\ord_{z_0} g_{\lambda_i}-
\ord_{z_0}W_{\mathcal{S}}(g_{\lambda_0},g_{\lambda_1},\dots,g_{\lambda_n})
\leq
\sum_{i=0}^n\min\{\ord_{z_0} g_{\lambda_i},n+1-p\}.
\end{align}	
The left hand side of \eqref{estimate zero order of W, reduced, poniwise estimate, simplify to set R of cardinality n+1} is exactly the pole order at $z_0$ of the logarithmic Wronskian $$W^{\log}_{\mathcal{S}}(g_{\lambda_0},g_{\lambda_1},\dots,g_{\lambda_n}).$$
 Denote by $\mu_{\lambda_i}$ the zero order of $g_{\lambda_i}$ at $z_0$. Since the order of each operator $\Delta_s\in\mathcal{S}$ satisfies $|\Delta_s|\leq n+1-p$, it follows from Proposition~\ref{vanishing order estimate and pole of logarithmic derivative} that the pole order of $W^{\log}_{\mathcal{S}}(g_{\lambda_0},g_{\lambda_1},\dots,g_{\lambda_n})$ is bounded from above by $\sum_{i=0}^n\min\{\ord_{z_0} g_{\lambda_i},n+1-p\}$, whence concludes the proof.

\end{proof} 
	
\subsection{An application of the logarithmic derivative lemma}
Recalling the following version of the classical logarithmic derivative lemma in higher dimension due to Vitter \cite{Vitter77} (see also \cite{Fujimoto1985}).
\begin{namedthm*}{Logarithmic derivative Lemma}
Let $g$ be a nonconstant meromorphic function on $\mathbb{C}^p$. Then for any $p$-tuple of nonnegative integers $\alpha=(\alpha_1,\dots,\alpha_p)\not=(0,\dots,0)$, the following estimate holds
\[
m\bigg(r,\dfrac{D^{\alpha}g}{g}\bigg)
=O(\log^+ T_g(r)+\log r)\qquad\parallel.
\]	
\end{namedthm*}
Consequently, one has:	
\begin{pro}(\cite{Vitter77,Fujimoto1985})
\label{logarithmic derivative estimate for log wronskian g_lambda i}	
For any subset $R=\{\lambda_0,\lambda_1,\dots,\lambda_n\}\subset Q$ with $|R|=n+1$, one has the estimate
\begin{equation}
\label{estimate of logarithmic wronskian}
m\big(r,W^{\log}_{\mathcal{S}}(g_{\lambda_0},g_{\lambda_1},\dots,g_{\lambda_n})\big)
=O(\log^+T_f(r)+\log r)\qquad\parallel.
\end{equation}
\end{pro}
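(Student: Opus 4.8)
The plan is to expand $W^{\log}_{\mathcal{S}}(g_{\lambda_0},\dots,g_{\lambda_n})$ by the Leibniz formula and then bound each resulting term by the Logarithmic Derivative Lemma. Writing the $(s,i)$-entry of the defining matrix as $\Delta^s(g_{\lambda_i})/g_{\lambda_i}$ (with $\Delta^0=\mathrm{id}$, so that the top row is $(1,\dots,1)$), and recalling that each $g_{\lambda_i}=\sum_{j=0}^{n}a_{\lambda_i j}f_j$ is a nonzero entire function because $f$ is linearly nondegenerate, one obtains
\[
W^{\log}_{\mathcal{S}}(g_{\lambda_0},\dots,g_{\lambda_n})
=
\sum_{\sigma\in\mathfrak{S}_{n+1}}
\sign(\sigma)\,
\prod_{i=0}^{n}\frac{\Delta^{\sigma(i)}(g_{\lambda_i})}{g_{\lambda_i}},
\]
a finite sum of products of meromorphic functions on $\mathbb{C}^p$, where $\mathfrak{S}_{n+1}$ is the group of permutations of $\{0,1,\dots,n\}$.

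Next I would use the two elementary properties of the proximity function $m(r,\cdot)=\int_{\|z\|=r}\log^{+}|\cdot|\,\gamma$: subadditivity under products, $m(r,h_1h_2)\le m(r,h_1)+m(r,h_2)$, and subadditivity under sums, $m\big(r,\sum_k h_k\big)\le\sum_k m(r,h_k)+O(1)$. Applying both to the expansion above gives
\[
m\big(r,\,W^{\log}_{\mathcal{S}}(g_{\lambda_0},\dots,g_{\lambda_n})\big)
\leq
\sum_{\sigma\in\mathfrak{S}_{n+1}}\ \sum_{i=0}^{n}
m\Big(r,\,\frac{\Delta^{\sigma(i)}(g_{\lambda_i})}{g_{\lambda_i}}\Big)
+O(1),
\]
a sum of $(n+1)!\,(n+1)$ terms. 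For each term: if $\sigma(i)=0$ the quotient is the constant $1$ and contributes nothing; if $\sigma(i)\neq 0$ while $g_{\lambda_i}$ is a nonzero constant or $\Delta^{\sigma(i)}(g_{\lambda_i})\equiv 0$, the quotient is $0$ and again contributes nothing; otherwise $g_{\lambda_i}$ is nonconstant, and the Logarithmic Derivative Lemma yields $m\big(r,\Delta^{\sigma(i)}(g_{\lambda_i})/g_{\lambda_i}\big)=O(\log^{+}T_{g_{\lambda_i}}(r)+\log r)$ outside a set of finite Lebesgue measure.

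Finally I would replace $T_{g_{\lambda_i}}(r)$ by $T_f(r)$: since $\|H^*_{\lambda_i}\|=1$, the Cauchy--Schwarz inequality gives $|g_{\lambda_i}(z)|\le\sqrt{n+1}\,\|f(z)\|_{\max}$, and as $g_{\lambda_i}$ is entire (hence pole-free) its Nevanlinna characteristic satisfies $T_{g_{\lambda_i}}(r)=m(r,g_{\lambda_i})\le T_f(r)+O(1)$. Substituting this into each term, summing the finitely many estimates, and taking the union of the corresponding finitely many exceptional sets (still of finite measure) yields
\[
m\big(r,\,W^{\log}_{\mathcal{S}}(g_{\lambda_0},\dots,g_{\lambda_n})\big)
=
O(\log^{+}T_f(r)+\log r)\qquad\parallel,
\]
as desired. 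There is no genuine obstacle here; the only points requiring care are the bookkeeping of the additive $O(1)$ constants and of the exceptional sets implicit in the $\parallel$ convention, the degenerate cases in which an entry is constant or vanishes identically, and the standard comparison $T_{g_{\lambda_i}}(r)\le T_f(r)+O(1)$ between the characteristic functions of $g_{\lambda_i}$ and of $f$.
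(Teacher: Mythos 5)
Your proposal is correct and follows essentially the same route as the paper: expand the logarithmic Wronskian as a polynomial (Leibniz sum) in the quotients $\Delta^{s}(g_{\lambda_i})/g_{\lambda_i}$, control its proximity function via the subadditivity of $\log^{+}$ under sums and products, apply Vitter's Logarithmic Derivative Lemma to each quotient, and conclude with the Cauchy--Schwarz comparison $T_{g_{\lambda_i}}(r)\le T_f(r)+O(1)$. The only difference is cosmetic: you make the permutation expansion and the degenerate cases (constant entries, identically vanishing derivatives) explicit, where the paper simply invokes the polynomial structure and the two basic $\log^{+}$ inequalities.
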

\begin{proof}	
For the sake of completeness, we include a proof here. Since $W^{\log}_{\mathcal{S}}(g_{\lambda_0},g_{\lambda_1},\dots,g_{\lambda_n})$ is a polynomial of $\dfrac{\Delta^s(g_{\lambda_i})}{g_{\lambda_i}}$, $1\leq s\leq n$, $0\leq i\leq n$, using the basic inequalities
\begin{equation*}
\label{basic inequality log+}
\log^+\bigg(\sum_{i=1}^mx_i\bigg)
\leq\sum_{i=1}^m\log^+x_i+\log m,\qquad\qquad
\log^+\bigg(\prod_{i=1}^mx_i\bigg)
\leq 
\sum_{i=1}^m\log^+x_i,
\end{equation*}
there exists some constant $M>0$ such that
\begin{equation}
\label{proximity of wronskian of g_i is dominated by Dmgi/g_i}
m\big(r,W^{\log}_{\mathcal{S}}(g_{\lambda_0},g_{\lambda_1},\dots,g_{\lambda_n})\big)
\leq
M+M\sum_{s=1}^n\sum_{i=0}^n m\bigg(r,\dfrac{\Delta^sg_{\lambda_i}}{g_{\lambda_i}}\bigg).
\end{equation}
Applying the Logarithmic Derivative Lemma, one receives
\begin{equation}
\label{proximity of Delta sg_i/g_i estimate via logarithmic derivative lemma}
m\bigg(r,\dfrac{\Delta^s g_{\lambda_i}}{g_{\lambda_i}}\bigg)
=O(\log^+ T_{g_{\lambda_i}}(r)+\log r)\qquad\parallel.
\end{equation}	
By the Cauchy-Schwarz inequality, one obtains
\[
\|g_{\lambda_i}(z)\|^2
\leq
\|H^*_{\lambda_i}|^2
\|f(z)\|^2
=|f(z)\|^2,\eqno \scriptstyle{(0\,\leq i\,\leq\, n,\,z\,\in\,\mathbb{C}^p)},
\]
which implies
\begin{equation}
\label{order function of glambda i vs order function of f}
T_{g_{\lambda_i}}(r)
\leq
T_f(r)
+O(1).
\end{equation}
Combining \eqref{proximity of wronskian of g_i is dominated by Dmgi/g_i}, \eqref{proximity of Delta sg_i/g_i estimate via logarithmic derivative lemma}, \eqref{order function of glambda i vs order function of f}, one gets the desired estimate.
\end{proof}	

\subsection{End of the proof of the Main Theorem}
	
First, by taking logarithm on both sides of~\eqref{estimation of norm, statement} and then taking the integrations on disc, one  receives
\begin{align}
	\label{T<=integer varphi+psi}
	\big(q-n-1)T_{f}(r)
	\leq
	\dfrac{1}{2\pi}
	\int_0^{2\pi}
	\log \varphi(re^{i\theta})
	\dif\theta
	+
	\dfrac{1}{2\pi}
	\int_0^{2\pi}
	\psi(re^{i\theta})\dif\theta
	+O(1),
\end{align}
	where
	\[
	\varphi
	=
	\dfrac{\prod_{i\in Q}| g_i(z)|}{|W_{\mathcal{S}}(f_0,f_1,\dots,f_n)(z)|},
	\ \ \ \ \ 
	\psi=
	\sum_{R\subset Q,\,|R|=n+1}
	\big|W^{\log}_{\mathcal{S}}(\{g_i\}_{i\in R})(z)\big|.
	\]
Using the Jensen formula, we estimate the fist term in the right hand side of \eqref{T<=integer varphi+psi} as follows:
\begin{align}
\dfrac{1}{2\pi}
\int_0^{2\pi}
\log|\varphi(re^{i\theta})|\dif\theta
\leq
N_{\varphi}(r,0)+O(1)\notag.
\end{align}
Using Proposition~\ref{estimation of zero divisor of w}, one receives
\begin{align*}
	(\varphi)_0
	&=
\sum_{i\in Q}(g_i)_0-(W_{\mathcal{S}}(f_0,f_1,\dots,f_n))_0\\
	&\leq
	\sum_{i\in Q}
	\sum_{z\in\mathbb{C}^p}\min\{\ord_z g_i,n+1-p\}\{z\},
	\end{align*}
	which yields
\begin{equation*}
N_{\varphi}(r,0)
\leq
\sum_{i\in Q}N_f^{[n+1-p]}(r,H_i).
\end{equation*}
Combining the above two estimates, one obtains
\begin{align}
\dfrac{1}{2\pi}
\int_0^{2\pi}
\log|\varphi(re^{i\theta})|\dif\theta
\leq
\sum_{i\in Q} N_{f}^{[n+1-p]}(r,H_i)
+
O(1).\notag
\end{align}

	Together this with~Proposition~\ref{logarithmic derivative estimate for log wronskian g_lambda i} and~\eqref{T<=integer varphi+psi}, the proof is finished.

	\section{Some applications}
	\label{section: applications}
\subsection{Holomorphic mappings from source spaces of arbitrary dimension}
Combining the Main Theorem with the classical result of Carlson-Griffiths, we have
\begin{thm} 
\label{arbitrary dimension}	
Let $p,n$ be two positive integers.
	Let $f:\mathbb{C}^p\rightarrow\mathbb{P}^n(\mathbb{C})$ be a linearly nondegenerate  holomorphic mapping of maximal rank, and let $\{H_i\}_{1\leq i\leq q}$ be a family of $q\geq n+2$ hyperplanes in general position in $\mathbb{P}^n(\mathbb{C})$. Then the following Second Main Theorem type estimate holds
	\begin{align}
	(
	q
	-
	n-1
	)\,
	T_{f}(r)\leq\sum_{i=1}^q N_{f}^{[\kappa(p,n)]}(r,H_i)
	+
	S_{f}(r),
	\end{align}
	where
	\begin{equation}
	\label{defintion of kappa}
	\kappa(p,n)=
	\begin{cases}
	n+1-p,&\text{if}\quad 1\leq p< n\\
	1,&\text{if} \quad n\leq p
	\end{cases}
	\end{equation}
and
	$$S_f(r)=O(\log T_f(r)+\log r)\qquad\parallel
	$$ is a small error term compared with the order function.
	
\end{thm}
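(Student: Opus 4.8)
The plan is to reduce the statement to two Second Main Theorems that are already available, splitting according to the sign of $n+1-p$.

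First I would dispose of the range $1\leq p\leq n$. Here $\kappa(p,n)=n+1-p$: for $p<n$ this is the first line of \eqref{defintion of kappa}, while for $p=n$ the second line gives $\kappa(n,n)=1=n+1-n$. Hence in this range the asserted inequality is, term for term, the conclusion of the Main Theorem, and nothing further is needed.

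It remains to treat $p>n$, where $\kappa(p,n)=1$. Here I would first observe that maximal rank forces $\rank(df)=\min\{p,n\}=n=\dim\mathbb{P}^n(\mathbb{C})$ at some point, so $f$ is differentiably nondegenerate, and the source dimension $p$ is at least the target dimension. Since the hyperplanes $H_1,\dots,H_q$ are in general position, the reduced divisor $D=H_1+\cdots+H_q$ has simple normal crossings (through any point of $\mathbb{P}^n(\mathbb{C})$ at most $n$ of the $H_i$ pass, and they meet transversally there). I would then invoke the Carlson-Griffiths Second Main Theorem \cite{Carlson-Griffiths 72, Griffiths-King73}, which for a differentiably nondegenerate map and a simple normal crossings divisor is sharp with truncation level $1$; combined with $K_{\mathbb{P}^n(\mathbb{C})}=\mathcal{O}(-n-1)$ and $\mathcal{O}(D)=\mathcal{O}(q)$ it reads
\begin{align*}
(q-n-1)\,T_f(r)
\leq
N_f^{[1]}(r,D)+S_f(r)
=
\sum_{i=1}^q N_f^{[1]}(r,H_i)+S_f(r),
\end{align*}
which is exactly the claim with $\kappa(p,n)=1$. (The two cases overlap at $p=n$, consistently with the remark in the introduction.) In all cases the error term is of the stated form $S_f(r)=O(\log T_f(r)+\log r)$.

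I do not expect a genuine obstacle here: the argument is entirely a matching of the value of $\kappa(p,n)$ against the truncation levels produced by the Main Theorem (for $p\leq n$) and by Carlson-Griffiths (for $p>n$). The only point deserving a line of justification is the passage to \cite{Carlson-Griffiths 72}, namely that "general position" of the $H_i$ does yield the simple normal crossings hypothesis needed there, and that the notion of differentiable nondegeneracy used in that work coincides with maximal rank in the regime $p\geq n$; both are immediate once unwound.
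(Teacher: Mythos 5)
Your proposal is correct and follows exactly the route the paper intends: the paper's entire justification for this theorem is the single sentence ``Combining the Main Theorem with the classical result of Carlson--Griffiths,'' i.e.\ the Main Theorem handles $1\leq p\leq n$ (where $\kappa(p,n)=n+1-p$, agreeing with $1$ at $p=n$) and the Carlson--Griffiths equidimensional theory handles $p\geq n$ with truncation level $1$. Your additional remarks --- that general position of the $H_i$ yields a simple normal crossings divisor and that maximal rank coincides with differentiable nondegeneracy when $p\geq n$ --- are exactly the (easy) points the paper leaves implicit.
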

\subsection{Defect relation and ramification theorem}
It is well-known that one can deduce from a Second Main Theorem type estimate  the following two standard inequalities, called the defect relation and ramification theorem. With the improvement in the truncation level in our Main Theorem, these results can be strengthened as well.
\begin{namedthm*}{Defect relation}
Let $f:\mathbb{C}^p\rightarrow\mathbb{P}^n(\mathbb{C})$ be a linearly nondegenerate  holomorphic mapping of maximal rank, and let $\{H_i\}_{1\leq i\leq q}$ be a family of $q\geq n+2$ hyperplanes in general position in $\mathbb{P}^n(\mathbb{C})$. Then one has the following defect relation
\begin{equation}
\label{defect relation}
\sum_{i=1}^q \delta_{f}^{[\kappa(p,n)]}(H_i)
\leq
n+1,
\end{equation}
where $\kappa(p,n)$ is given as in~\eqref{defintion of kappa}.
\end{namedthm*}
	
\begin{proof}
Rewriting Theorem \ref{arbitrary dimension}  as
\[
\sum_{i=1}^q
\bigg(
1-
\dfrac{N_f^{[\kappa(p,n)]}(r,H_i)}{T_{f}(r)}
\bigg)
\leq
n+1
+
\dfrac{S_{f}(r)}{T_{f}(r)},
\]
then taking the limit inferior of both sides of the above inequality, one obtains the claimed inequality.
\end{proof}
	
\begin{namedthm*}{Ramification theorem}
Let $f:\mathbb{C}^p\rightarrow\mathbb{P}^n(\mathbb{C})$ be a linearly nondegenerate holomorphic mapping of maximal rank, and let $\{H_i\}_{1\leq i\leq q}$ be a family of $q\geq n+2$ hyperplanes in general position in $\mathbb{P}^n(\mathbb{C})$. If $f$ is completely $\mu_{i}$--ramified over each  hyperplane $H_i$ for $i=1,\dots, q$, then one has
\[
\label{ramification theorem statement}
\sum_{i=1}^q
\bigg(
1
-
\dfrac{\kappa(p,n)}{\mu_{i}}
\bigg) 
\leq
n+1,
\]
where $\kappa(p,n)$ is given as in \eqref{defintion of kappa}.
\end{namedthm*}
\begin{proof}
For each $i$ with $\mu_i<\infty$, using the basis inequality
$$
N_f^{[\kappa(p,n)]}(r,H_i)
\leq
\kappa(p,n)
N_f^{[1]}(r,H_i),
$$
together with the First Main Theorem, one gets
\[
\delta_{f}^{[\kappa(p,n)]}(H_i)
\geq
1-\dfrac{\kappa(p,n)}{\mu_i}.
\]
When $\mu_i=\infty$, the image $f(\mathbb{C}^p)$ avoids $H_i$, hence we have the trivial equality $\delta_{f}^{[\kappa(p,n)]}(H_i)=1$. By these observations, the desired inequality follows immediately from the defection relation \eqref{defect relation}.
\end{proof}

\subsection{Holomorphic mapping into the Fermat hypersurface}
We employ the previous results to provide another proof for \cite[Theorem 2.2.2]{Etesse2023}.
\begin{thm}
\label{degeneracy of holomorphic mapping into fermat hypersurface}
Let $p\geq 1,n\geq2$ be positive integers. Let $F$ be the Fermat hypersurface of degree $d$ in $\mathbb{P}^n(\mathbb{C})$, defined by the homogeneous polynomial 
$$
Q(\omega)=\sum_{i=0}^n\omega_i^d,
$$
where $\omega=[\omega_0:\omega_1:\dots:\omega_n]$ is a homogeneous coordinate of $\mathbb{P}^n(\mathbb{C})$.
If $d>(n+1)\kappa(n-1,p)$, then the image of every holomorphic mapping $f\colon\mathbb{C}^p\rightarrow F$ of maximal rank lies in a hyperplane section.
\end{thm}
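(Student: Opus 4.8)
The plan is to deduce the statement from the Second Main Theorem of Theorem~\ref{arbitrary dimension}, applied to the companion map $g:=[f_0^d:\cdots:f_n^d]$, which has values in the hyperplane $\{\omega_0+\cdots+\omega_n=0\}\cong\mathbb{P}^{n-1}(\mathbb{C})$. First, since $\dim_{\mathbb{C}}F=n-1$, every holomorphic map into $F\subset\mathbb{P}^n(\mathbb{C})$ has differential of rank $\leq n-1$ at every point, so if $p\geq n$ there is no map of maximal rank into $F$ and there is nothing to prove; hence we may assume $1\leq p\leq n-1$. We argue by contradiction: suppose the image of $f=[f_0:\cdots:f_n]$ (a reduced representation) lies in no hyperplane section of $F$. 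Since the hyperplane sections of $F$ are exactly the sets $F\cap H$ and $f(\mathbb{C}^p)\subset F$, this forces $f\colon\mathbb{C}^p\to\mathbb{P}^n(\mathbb{C})$ to be linearly nondegenerate; in particular $f_i\not\equiv 0$ for all $i$.

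Consider the finite morphism $\sigma_d\colon\mathbb{P}^n(\mathbb{C})\to\mathbb{P}^n(\mathbb{C})$, $[\omega_0:\cdots:\omega_n]\mapsto[\omega_0^d:\cdots:\omega_n^d]$, which is \'etale over the complement of the coordinate hyperplanes $H_0,\dots,H_n$, and put $g:=\sigma_d\circ f$. As $[f_0:\cdots:f_n]$ is reduced, so is $[f_0^d:\cdots:f_n^d]$; since $\|g(z)\|_{\max}=\|f(z)\|_{\max}^d$ one has $T_g(r)=d\,T_f(r)$; and the Fermat relation $\sum_i f_i^d\equiv 0$ says precisely that $g$ takes values in $L:=\{\omega_0+\cdots+\omega_n=0\}\cong\mathbb{P}^{n-1}(\mathbb{C})$. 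Regarded as a map $\mathbb{C}^p\to L$, $g$ has maximal rank: the maximal-rank locus of $f$ is open and nonempty, the set $\{z:f_i(z)\neq 0,\ 0\leq i\leq n\}$ is open and dense, and at a point $z_1$ in their intersection $\sigma_d$ is \'etale near $f(z_1)$, so $\rank(dg|_{z_1})=\rank(df|_{z_1})=\min\{p,n\}=\min\{p,n-1\}$. Moreover $H_0,\dots,H_n$ together with $L$ are $n+2$ hyperplanes in general position in $\mathbb{P}^n(\mathbb{C})$, so their restrictions $\widetilde H_i:=H_i\cap L$ are $q:=n+1=(n-1)+2$ hyperplanes in general position in $L\cong\mathbb{P}^{n-1}(\mathbb{C})$, and $g^{\ast}\widetilde H_i=(f_i^d)_0=d\,(f_i)_0$, so every point of $g^{\ast}\widetilde H_i$ has multiplicity $\geq d$. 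Writing $m:=\kappa(p,n-1)=\max\{n-p,1\}$ and noting $m\leq d$, one has $\min\{m,d\cdot\ord_z f_i\}=m$ at each zero of $f_i$, whence, by the First Main Theorem,
\[
N_g^{[m]}(r,\widetilde H_i)=m\,N^{[1]}\big(r,(f_i)_0\big)\leq m\,N\big(r,(f_i)_0\big)=\tfrac{m}{d}\,N_g(r,\widetilde H_i)\leq\tfrac{m}{d}\big(T_g(r)+O(1)\big).
\]

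Assuming for the moment that $g$ is linearly nondegenerate in $L$, we apply Theorem~\ref{arbitrary dimension} to the linearly nondegenerate, maximal-rank map $g\colon\mathbb{C}^p\to L\cong\mathbb{P}^{n-1}(\mathbb{C})$ with the $q=n+1$ general-position hyperplanes $\widetilde H_0,\dots,\widetilde H_n$; combined with the estimate above this gives
\[
T_g(r)=\big(q-(n-1)-1\big)T_g(r)\leq\sum_{i=0}^{n}N_g^{[m]}(r,\widetilde H_i)+S_g(r)\leq\frac{(n+1)\,m}{d}\,T_g(r)+S_g(r)+O(1).
\]
Since $d>(n+1)\,m=(n+1)\,\kappa(p,n-1)$, the constant $c:=1-\tfrac{(n+1)m}{d}$ is positive, so $c\,T_g(r)\leq S_g(r)+O(1)$; as $f$ has rank $\geq 1$ somewhere it is nonconstant, hence $g$ is nonconstant and $T_g(r)\to\infty$, and dividing by $T_g(r)$ and letting $r\to\infty$ contradicts $\liminf_{r\to\infty}S_g(r)/T_g(r)=0$. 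This contradiction shows that $f$ lies in a hyperplane section of $F$.

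It remains to treat the case where $g$ is linearly \emph{degenerate} in $L$, which is the genuinely delicate point. Then $f_0^d,\dots,f_n^d$ satisfy, besides $\sum_i f_i^d\equiv 0$, a further linear relation independent of it, and eliminating one term exhibits a nontrivial relation $\sum_{i\neq j}b_i f_i^d\equiv 0$ involving only $n$ of the $f_i^d$; the map $[f_i:i\neq j]\colon\mathbb{C}^p\to\mathbb{P}^{n-1}(\mathbb{C})$ is then linearly nondegenerate and, after absorbing $d$--th roots of the nonzero $b_i$ into the coordinates and discarding coordinates with $b_i=0$, has image in a Fermat hypersurface of degree $d$ in a projective space of strictly smaller dimension. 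One then wishes to conclude by induction on $n$, the base $n=2$ being the argument above carried out in $L\cong\mathbb{P}^1(\mathbb{C})$, where any three points are in general position and a maximal-rank map is automatically nonconstant and linearly nondegenerate. The hard part will be making this induction go through: dropping a coordinate can decrease $\rank(df)$ by one, so one must choose the eliminated index $j$ at a point where $df$ attains its maximal rank so that the rank is preserved, and simultaneously so that the resulting relation $\sum b_i f_i^d\equiv 0$ remains usable (in particular that its support is large enough); negotiating this double constraint is the crux of the proof.
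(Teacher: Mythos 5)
Your first branch is correct and is precisely the paper's argument: you reduce to $1\leq p\leq n-1$, pass to $g=[f_0^d:\cdots:f_n^d]$ landing in $L=\{\omega_0+\cdots+\omega_n=0\}\cong\mathbb{P}^{n-1}(\mathbb{C})$, check that $g$ inherits maximal rank, observe that the $n+1=(n-1)+2$ restricted coordinate hyperplanes are in general position in $L$, and use $g^{*}\widetilde H_i=d\,(f_i)_0$ to get $N_g^{[m]}(r,\widetilde H_i)\leq\tfrac{m}{d}\,T_g(r)+O(1)$ with $m=\kappa(p,n-1)$, whence $d\leq (n+1)\kappa(p,n-1)$ if $g$ is linearly nondegenerate in $L$. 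The paper packages the identical computation as its Ramification Theorem applied to $g$ with $\mu_i\geq d$. (Like the paper's own proof, you work with $\kappa(p,n-1)=\max\{n-p,1\}$ rather than the $\kappa(n-1,p)$ appearing in the statement; that discrepancy originates in the paper, not with you.)

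The genuine gap is your final paragraph. In the branch where $g$ is linearly degenerate in $L$ you propose an induction on $n$ but do not carry it out: you yourself identify the two obstructions (preserving maximal rank after deleting a coordinate, and controlling the support of the new relation $\sum b_i f_i^d\equiv 0$) and offer no argument for either, so the proof does not reach a conclusion in that case. For comparison, the paper performs no induction at all: its dichotomy is exactly yours, but in the degenerate branch it simply asserts, in one sentence, that the image of $g$ lying in a proper linear subspace of $L$ --- i.e.\ a relation $\sum c_i f_i^d\equiv 0$ independent of the Fermat relation --- already implies that the image of $f$ lies in a hyperplane section. You have (reasonably) judged that a linear relation among the $d$-th powers does not obviously yield a linear relation among the $f_i$ themselves, and have therefore set yourself a strictly harder task than the paper attempts in that branch; Green's treatment of this point proceeds via a decomposition into minimal vanishing subsums with progressively worse degree bounds, and nothing of that sort is supplied here. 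As written, the degenerate case is unproven, so the proposal is incomplete.
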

\begin{proof} We follow the arguments in \cite[Example 3.10.21]{Kob98}.
Let $H\cong\mathbb{P}^{n-1}(\mathbb{C})$ be the hyperplane in $\mathbb{P}^n(\mathbb{C})$ defined by $\sum_{i=0}^{n}\omega_i=0$. Suppose that $[f_0:f_1:\dots:f_n]$ is a reduced representation of $f$. Consider the endomorphism $$\pi\colon\mathbb{P}^n(\mathbb{C})\rightarrow\mathbb{P}^n(\mathbb{C}),\qquad [\omega_0:\omega_1:\dots:\omega_n]\rightarrow [\omega_0^d:\omega_1^d:\dots:\omega_n^d],
$$
and put $g=\pi\circ f\colon\mathbb{C}^p\rightarrow H\cong\mathbb{P}^{n-1}(\mathbb{C})$. Then $g$ is of maximal rank and its image lies in the hyperplane $H\cong\mathbb{P}^{n-1}(\mathbb{C})$. Let $\{H_i\}_{0\leq i\leq n}$ be the family of $n+1$ hyperplanes in $H\cong\mathbb{P}^{n-1}(\mathbb{C})$ given by $H_i=\{\omega_i=0\}$, which is in general position. For any $z\in g^{-1}(H_i)=f^{-1}(H_i)$, it is clear that $\ord_zg^*H_i\geq d$. If the image of $g$ doesn't lie in a smaller linear subspace of $H$, then by applying the ramification theorem for $g$ and the family $\{H_i\}_{0\leq i\leq n}$, one obtains
\[
\sum_{i=0}^{n}\bigg(1-\dfrac{\kappa(p,n-1)}{d}\bigg)\leq n,
\]
which yields $d\leq (n+1)\kappa(p,n-1)$, a contradiction. Hence the image of $g$ must be contained in some smaller linear subspace of $H$, which implies that the image of $f$ must lie in a hyperplane section.
\end{proof}

\subsection{Holomorphic mapping omitting a Fermat hypersurface}

\begin{thm}
\label{degeneracy of holomorphic mapping omitting fermat hypersurface}	
Let $p\geq 1,n\geq2$ be positive integers. Let $F$ be the Fermat hypersurface of degree $d$ in $\mathbb{P}^n(\mathbb{C})$, defined by the homogeneous polynomial 
	$$
	Q(\omega)=\sum_{i=0}^n\omega_i^d,
	$$
	where $\omega=[\omega_0:\omega_1:\dots:\omega_n]$ is a homogeneous coordinate of $\mathbb{P}^n(\mathbb{C})$.
	If $d>(n+1)\kappa(p,n)$, then  every holomorphic mapping $f\colon\mathbb{C}^p\rightarrow\mathbb{P}^n(\mathbb{C})$ of maximal rank omitting the Fermat hypersurface $F$ is algebraically degenerate.
\end{thm}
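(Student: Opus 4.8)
The plan is to deduce this from the Ramification Theorem, following the classical idea of Green~\cite{Green75}: compose $f$ with the degree-$d$ power endomorphism of $\mathbb{P}^n(\mathbb{C})$ and exploit that $f$ omits $F$. We may assume $f$ is linearly nondegenerate, since otherwise it is already algebraically degenerate; fix a reduced representation $[f_0:\cdots:f_n]$, so no $f_i$ vanishes identically. Put
$$
\pi\colon\mathbb{P}^n(\mathbb{C})\to\mathbb{P}^n(\mathbb{C}),\qquad[\omega_0:\cdots:\omega_n]\longmapsto[\omega_0^d:\cdots:\omega_n^d],
$$
and $g:=\pi\circ f$, which admits the reduced representation $[f_0^d:\cdots:f_n^d]$ because the $f_i^d$ have no common zero. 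I would then apply the Ramification Theorem to $g$ together with the $n+2$ hyperplanes $H_i=\{\omega_i=0\}$ for $0\le i\le n$ and $H_{n+1}=\{\omega_0+\cdots+\omega_n=0\}$, which are readily seen to be in general position (any $n+1$ of the linear forms $\omega_0,\dots,\omega_n,\omega_0+\cdots+\omega_n$ are linearly independent).

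First I would check that $g$ has maximal rank. The set $U\subset\mathbb{C}^p$ on which $\rank(df)=\min\{p,n\}$ is nonempty and open, and its complement — the common zero locus of the $\min\{p,n\}$-sized minors of $df$ — is a proper analytic subset, so $U$ is dense; similarly $\bigcup_{i=0}^n\{f_i=0\}$ is a proper analytic subset since no $f_i$ vanishes identically. Hence there is a point $X_1\in U$ with all $f_i(X_1)\neq 0$. Near $f(X_1)$, which avoids the coordinate hyperplanes, $\pi$ reads in affine charts as $(u_1,\dots,u_n)\mapsto(u_1^d,\dots,u_n^d)$, whose differential is invertible; since $dg_{X_1}=d\pi_{f(X_1)}\circ df_{X_1}$, we get $\rank(dg_{X_1})=\min\{p,n\}$, so $g$ has maximal rank.

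Now comes a dichotomy. If $g$ is linearly degenerate, there is a nonzero linear form $\sum_{i=0}^n c_i\omega_i$ with $\sum_{i=0}^n c_i f_i^d\equiv 0$; then $f(\mathbb{C}^p)\subset\{\sum_i c_i\omega_i^d=0\}$, so $f$ is algebraically degenerate and we are done. Otherwise $g$ is linearly nondegenerate of maximal rank, and the Ramification Theorem applies to it. For $0\le i\le n$, the $i$-th coordinate $f_i^d$ of the reduced representation of $g$ vanishes to order $d\cdot\ord_z f_i\ge d$ at every $z\in g^{-1}(H_i)$, so $g$ is completely $\mu_i$-ramified over $H_i$ with $\mu_i\ge d$; on the other hand $g^{-1}(H_{n+1})=\{z\in\mathbb{C}^p:\sum_{i=0}^n f_i(z)^d=0\}=f^{-1}(F)=\varnothing$ because $f$ omits $F$, so $\mu_{n+1}=\infty$. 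The Ramification Theorem for $g$ then yields
$$
(n+1)\Bigl(1-\tfrac{\kappa(p,n)}{d}\Bigr)+1\;\le\;\sum_{i=0}^{n+1}\Bigl(1-\tfrac{\kappa(p,n)}{\mu_i}\Bigr)\;\le\;n+1,
$$
which simplifies to $d\le(n+1)\kappa(p,n)$, contradicting the hypothesis $d>(n+1)\kappa(p,n)$. Therefore $g$ must be linearly degenerate, and by the dichotomy $f$ is algebraically degenerate.

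The only step requiring genuine care is the verification that $g$ has maximal rank — i.e. choosing a base point in $U$ lying off the coordinate hyperplanes so that $d\pi$ does not drop rank there; everything else is an immediate application of the Ramification Theorem, which is what keeps this argument elementary, in contrast to the proof of Theorem~\ref{degeneracy of holomorphic mapping into fermat hypersurface} where one genuinely passes to a hyperplane section of lower dimension.
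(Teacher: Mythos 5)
Your proof is correct and follows essentially the same route as the paper: compose with the power map $\pi$, apply the Ramification Theorem to $g=\pi\circ f$ with the $n+2$ hyperplanes $\{\omega_i=0\}$ and $\{\sum_i\omega_i=0\}$, and use $\mu_i\ge d$ together with $\mu_{n+1}=\infty$ to force $d\le(n+1)\kappa(p,n)$. The only difference is that you carefully verify that $g$ inherits maximal rank (by choosing a maximal-rank point of $f$ off the coordinate hyperplanes) and make the linear-degeneracy dichotomy explicit, both of which the paper asserts without detail.
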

\begin{proof}
 Suppose that $[f_0:f_1:\dots:f_n]$ is a reduced representation of $f$. As in the previous part, we consider the endomorphism $$\pi\colon\mathbb{P}^n(\mathbb{C})\rightarrow\mathbb{P}^n(\mathbb{C}),\qquad [\omega_0:\omega_1:\dots:\omega_n]\rightarrow [\omega_0^d:\omega_1^d:\dots:\omega_n^d],
$$
and put $g=\pi\circ f\colon\mathbb{C}^p\rightarrow\mathbb{P}^n(\mathbb{C})$. Then $g$ is of maximal rank. Let $\{H_i\}_{0\leq i\leq n+1}$ be the family of $n+2$ hyperplanes in $\mathbb{P}^{n}(\mathbb{C})$ given by
\begin{align*}
H_i&=\{\omega_i=0\},\quad(0\leq i\leq n),\\
H_{n+1}&=\big\{\sum_{i=0}^{n}\omega_i=0\big\},
\end{align*}
 which is in general position. Since $f$ omits the Fermat hypersurface $F$, the holomorphic mapping $g$ avoids the hyperplane $H_{n+1}$. For each $i$ with $0\leq i\leq n$ and for any $z\in g^{-1}(H_i)=f^{-1}(H_i)$, it is clear that $\ord_zg^*H_i\geq d$. If the image of $g$ doesn't lie in any hyperplane, then by applying the ramification theorem for $g$ and the family $\{H_i\}_{0\leq i\leq n+1}$, one obtains
	\[
	\sum_{i=0}^{n}\bigg(1-\dfrac{\kappa(p,n)}{d}\bigg)+1\leq n+1,
	\]
	which yields $d\leq (n+1)\kappa(p,n)$, a contradiction. Hence the image of $g$ must be contained in some hyperplane, which implies that the image of $f$ must lie in some hypersurface.
\end{proof}

\begin{center}
\bibliographystyle{plain}

\end{center}
\address
\end{document}